\newtheorem{theorem}{Theorem}[section]
\newtheorem{lemma}[theorem]{Lemma}
\newtheorem{remark}[theorem]{Remark}
\newtheorem{corollary}[theorem]{Corollary}
\newtheorem{proposition}[theorem]{Proposition}
\newtheorem{definition}[theorem]{Definition}
\newtheorem{example}[theorem]{Example}
\newtheorem{problem}[theorem]{Problem}
\begin{document}
	
	\title
	{The number of quasi-trees of bouquets with exactly one non-orientable loop}
	
\author{Qingying Deng\\
		{\small School of Mathematics and Computational Science, Xiangtan University, P. R. China}\\
		Xian'an Jin\\
		{\small School of Mathematical Sciences, Xiamen University, P. R. China}\\
		Qi Yan\footnote{Corresponding author.}\\
		{\small School of Mathematics and Statistics, Lanzhou University, P. R. China}\\
		{\small{Email: qingying@xtu.edu.cn (Q. Deng), xajin@xmu.edu.cn (X. Jin), yanq@lzu.edu.cn (Q. Yan)}}
	}
	\date{}
	
	\maketitle
	
	\begin{abstract}
Recently, Merino extended the classical relation between the $2n$-th Fibonacci number and the number of spanning trees of the $n$-fan graph to ribbon graphs, and established a relation between the $n$-associated Mersenne number and the number of quasi-trees of the $n$-wheel ribbon graph. Moreover, Merino posed a problem of finding the Lucas numbers as the number of spanning quasi-trees of a family of ribbon graphs. In this paper, we solve the problem and give the Matrix-Quasi-tree Theorem for a bouquet with exactly one non-orientable loop. Furthermore, this theorem is used to verify that the number of quasi-trees of some classes of bouquets is  closely related to the Fibonacci and Lucas numbers. We also give alternative proofs of the number of quasi-trees of these bouquets by using the deletion-contraction
relations of ribbon graphs.
	\end{abstract}
	
	$\mathbf{keywords:}$ Quasi-tree, Matrix-Quasi-tree Theorem, Fibonacci number, Lucas number, Delta-matroid.

	\section{Introduction}
	The relation between the number of spanning trees of fans and wheels with the Fibonacci and Lucas numbers seems to fascinate mathematicians in combinatorics, see \cite{Sedl67,Sedl69}. That spanning quasi-trees play the same role for ribbon graphs as trees for abstract graphs is described in \cite{ChunJCTA,Chun}. More recently, Merino \cite{Merino} related the Fibonacci and associated Mersenne numbers with the number of spanning quasi-trees of two families of ribbon graphs.

We denote the $n$-th Fibonacci and Lucas numbers by $f_{n}$ and $\ell_{n}$, respectively. The sequence of Fibonacci and Lucas numbers share the same recursive formula, $f_{n} =f_{n-1} + f_{n-2}$ and $\ell_{n} =\ell_{n-1} + \ell_{n-2}$. However, the initial values are different. The first two Fibonacci numbers are $f_{1} = 1$ and $f_{2} = 1$, while the first two Lucas numbers are $\ell_{1} =1$ and $\ell_{2} =3$. Note that the relation between $f_{n}$ and $\ell_{n}$ is: $f_{n}+f_{n-2}=\ell_{n-1}$ when $n\geq 3$.
The sequence of associated Mersenne numbers $a_{n}$ was first defined in \cite{Haselgrove} as the integer sequence such that $a_{1} = 1$, $a_{2} = 1$ and $a_{n} =a_{n-1} + a_{n-2}+1-(-1)^{n}$. The relation between $a_{n}$ and $\ell_{n}$ is: $a_{n} =\ell_{n}-1-(-1)^{n}$.	
	
	

Let $\mathbb{F}_n$ be the bouquet with the signed rotation
$$(1,2,1,3,2,4,3,\cdots,i, i-1,i+1,i,\cdots,n-1,n-2,n,n-1,n),$$ whose chord diagram $D(\mathbb{F}_n)$ consists of the pairs $$\{(1,3),(2,5),(4,7),\cdots,(2n-4,2n-1),(2n-2,2n)\}.$$ The corresponding intersection graph of $\mathbb{F}_n$ is the $n$-path $P_n$.

Let $\mathbb{W}_n~(n\geq 3)$ be the bouquet with the signed rotation
$$(1,n,2,1,3,2,4,3,\cdots,i, i-1,i+1,i,\cdots,n-1,n-2,n,n-1),$$ whose chord diagram $D(\mathbb{W}_n)$ consists of the pairs $$\{(1,4),(3,6),(5,8),\cdots,(2n-3,2n),(2n-1,2)\}.$$
The corresponding intersection graph of $\mathbb{W}_n$ is the $n$-cycle $C_n$.

The number of quasi-trees of $\mathbb{F}_n$ and $\mathbb{W}_n$ were obtained from the recent work of Merino \cite{Merino} by using the Matrix-Quasi-tree theorem \cite{Bouchet87,Lauri,Macris,Merino2023}.

\begin{theorem}[\cite{Bouchet87,Lauri,Macris,Merino2023}]$($Matrix-Quasi-tree Theorem$)$
	Given an orientable bouquet $B$ with $n$ edges, the number of quasi-trees of $B$ equals $\det(I_{n} + A(B))$, where $A(B)$ is the intersection matrix of $B$ $($see Section 4$)$.
\end{theorem}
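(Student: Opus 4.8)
\noindent\emph{Proof strategy.} The plan is to reduce the statement to two structural facts about the intersection matrix $A(B)$ of an orientable bouquet. Start from the multilinear expansion of a determinant: for every $n\times n$ matrix $M$,
$$\det(I_n+M)=\sum_{S\subseteq[n]}\det M[S],$$
where $M[S]$ is the principal submatrix of $M$ indexed by $S$ and $\det M[\emptyset]:=1$; this follows by writing the $i$th column of $I_n+M$ as $e_i$ plus the $i$th column of $M$, expanding by multilinearity, and noting that choosing $e_i$ in the columns outside $S$ forces the surviving term to be $\det M[S]$. Taking $M=A(B)$, the theorem follows once we prove that for every $S\subseteq[n]$
$$\det A(B)[S]=\begin{cases}1,& B|_S\text{ is a quasi-tree},\\ 0,& \text{otherwise},\end{cases}\qquad(\star)$$
$B|_S$ being the spanning ribbon subgraph of $B$ with edge set $S$; for then $\det(I_n+A(B))=\sum_S\det A(B)[S]$ counts exactly the quasi-trees of $B$.

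To prove $(\star)$ I would first decide when $B|_S$ is a quasi-tree. As $B$ is a bouquet, so is $B|_S$, whence it is a quasi-tree precisely when it has a single boundary component; and for an orientable bouquet the number of boundary components equals $1+\operatorname{nullity}_{\mathbb F_2}$ of the $\mathbb F_2$-reduction of its intersection matrix (equivalently, of the adjacency matrix of its intersection graph). Applying this to $B|_S$, whose intersection matrix is $A(B)[S]$, we get: $B|_S$ is a quasi-tree if and only if $A(B)[S]$ is nonsingular over $\mathbb F_2$, i.e.\ if and only if $\det A(B)[S]$ is odd. On the other hand, the orientation of the ribbon surface makes $A(B)$ skew-symmetric over $\mathbb Z$, so $A(B)[S]$ is skew-symmetric, and hence $\det A(B)[S]=0$ when $|S|$ is odd and $\det A(B)[S]=\operatorname{Pf}(A(B)[S])^2\ge 0$ when $|S|$ is even. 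Combining these two observations, $(\star)$ reduces to the single assertion that every nonsingular principal submatrix of $A(B)$ has determinant $1$, equivalently Pfaffian $\pm1$: that $A(B)$ is principally unimodular.

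The principal unimodularity of $A(B)$ is the heart of the proof and is where the orientability hypothesis is genuinely needed --- in the non-orientable case the relevant matrix is symmetric over $\mathbb F_2$ rather than skew-symmetric over $\mathbb Z$, $(\star)$ can fail, and a separate theorem is required (as the rest of the paper does once one non-orientable loop is present). Here I would invoke Bouchet's theorem that the intersection matrix of an orientable ribbon graph is principally unimodular; alternatively one reproves it by induction on $|E(B)|$, using that deleting an edge removes a row and column of $A(B)$, while contracting an edge (through a partial duality) acts on $A(B)$ by a principal pivot transform, and principal unimodularity is preserved under both. Together with the first two paragraphs, this yields $(\star)$ and hence $\det(I_n+A(B))=\#\{\text{quasi-trees of }B\}$. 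The delicate points to nail down are the sign convention that makes $A(B)$ skew-symmetric consistently with the chosen surface orientation, and the boundary-component identity $f(B|_S)=1+\operatorname{nullity}_{\mathbb F_2}(A(B)[S]\bmod 2)$ underlying the quasi-tree criterion; both are standard but should be set out carefully before they are used.
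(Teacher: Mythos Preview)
The paper does not prove this theorem; it is quoted from the literature (Bouchet, Lauri, Macris--Pul\'e, Merino--Moffatt--Noble) both in the introduction and again as Theorem~\ref{key1} in Section~4, with no proof given. So there is no ``paper's own proof'' to compare against directly.

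That said, your strategy is correct and is exactly the one the paper implicitly relies on later. Your identity $\det(I_n+M)=\sum_{S}\det M[S]$ is precisely the remark the paper records after Corollary~\ref{principleuni}, and your key claim $(\star)$ --- that $\det A(B)[S]$ equals $1$ on quasi-trees and $0$ otherwise --- is the paper's Proposition~\ref{orient}, again cited without proof from \cite{Merino2023}. Your reduction of $(\star)$ to (i) the boundary-component formula $f(B|_S)=1+\operatorname{nullity}_{\mathbb F_2}(A(B)[S]\bmod 2)$ and (ii) Bouchet's principal unimodularity of $A(B)$ is the standard route and is sound; the skew-symmetry/Pfaffian observation correctly handles the non-negativity and the odd-$|S|$ case. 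The only places that would need expansion in a self-contained write-up are exactly the two you flag at the end: the sign/orientation convention making $A(B)$ genuinely skew-symmetric over $\mathbb Z$, and a proof (or precise citation) of the Cohn--Lempel-type boundary-component identity. Nothing in your outline is wrong or circular.
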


\begin{theorem}[\cite{Merino}]
The numbers of quasi-trees of $\mathbb{F}_{m}~(m\geq 0)$ and $\mathbb{W}_{n}~(n\geq 3)$ are equal to $(m+1)$-th Fibonacci number $f_{m+1}$ and $n$-th associated Mersenne number $a_{n}$, respectively.
\end{theorem}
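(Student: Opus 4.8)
The plan is to apply the Matrix-Quasi-tree Theorem and reduce the statement to two determinant evaluations. Both $\mathbb{F}_{m}$ and $\mathbb{W}_{n}$ are orientable bouquets, so their numbers of quasi-trees are $\det(I_{m}+A(\mathbb{F}_{m}))$ and $\det(I_{n}+A(\mathbb{W}_{n}))$; it therefore suffices to read the two intersection matrices off the chord diagrams and compute. From $D(\mathbb{F}_{m})=\{(1,3),(2,5),(4,7),\ldots,(2m-2,2m)\}$ one checks that chord $k$ interlaces precisely chords $k-1$ and $k+1$ (indices outside the range being absent), so the interlacement graph is the path $P_{m}$, while from $D(\mathbb{W}_{n})=\{(1,4),(3,6),\ldots,(2n-3,2n),(2n-1,2)\}$ the same holds cyclically---the last chord $(2n-1,2)$ also interlaces the first---so the interlacement graph is the cycle $C_{n}$; both facts are already recorded in the paper. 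Hence $I_{m}+A(\mathbb{F}_{m})$ is tridiagonal with $1$'s on the diagonal and, in each pair of off-diagonal positions $(k,k+1)$ and $(k+1,k)$, a $+1$ and a $-1$ (the opposite signs being forced by the skew-symmetry of the intersection form of an orientable bouquet), and $I_{n}+A(\mathbb{W}_{n})$ is that same tridiagonal matrix together with two extra nonzero entries in the corners $(1,n)$ and $(n,1)$.

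For $\mathbb{F}_{m}$, set $D_{m}=\det(I_{m}+A(\mathbb{F}_{m}))$ and expand the determinant along the last row, then along the last column. This gives the standard tridiagonal recursion $D_{m}=D_{m-1}-p_{m}\,D_{m-2}$, where $p_{m}$ is the product of the $(m-1,m)$ and $(m,m-1)$ entries; by skew-symmetry these two entries are $+1$ and $-1$ in some order, so $p_{m}=-1$ and hence $D_{m}=D_{m-1}+D_{m-2}$. With the base cases $D_{0}=1$ (the determinant of the empty matrix) and $D_{1}=\det(I_{1}+A(\mathbb{F}_{1}))=1$---these are $f_{1}$ and $f_{2}$---an immediate induction yields $D_{m}=f_{m+1}$ for all $m\ge 0$.

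For $\mathbb{W}_{n}$, one evaluates $\det(I_{n}+A(\mathbb{W}_{n}))$ either by a cofactor expansion along the first row, or by viewing the matrix as a perturbation of rank at most $2$ of the path matrix $I_{n}+A(\mathbb{F}_{n})$ (which is invertible since its determinant $f_{n+1}$ is nonzero) and applying the matrix determinant lemma; either route produces an explicit combination of path sub-determinants, hence of Fibonacci numbers. For the correct sign of the two corner entries---equivalently, the correct value of the product of the off-diagonal signs around the $n$-cycle, which has to be extracted from the signed rotation of $\mathbb{W}_{n}$---the determinant simplifies to $f_{n+1}+f_{n-1}-1-(-1)^{n}$, which by the identities $\ell_{n}=f_{n-1}+f_{n+1}$ and $a_{n}=\ell_{n}-1-(-1)^{n}$ recorded in the introduction equals $a_{n}$, as claimed.

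The determinant arithmetic is routine; the genuinely delicate point is fixing the intersection matrices, and in particular their signs, correctly. The value $\det(I+A)$ really does depend on the product of the off-diagonal signs around each cycle of the interlacement graph: using the plain $0$-$1$ adjacency matrix of $C_{n}$ instead of $A(\mathbb{W}_{n})$ would give a bounded periodic sequence, and taking the wrong cyclic sign would give a sequence different from $a_{n}$. So for $\mathbb{W}_{n}$ the orientation bookkeeping must be done from the signed rotation itself, not from the interlacement graph alone; for $\mathbb{F}_{m}$, whose interlacement graph is a tree, all sign choices are equivalent---they differ by simultaneous sign changes of a row and its matching column, which leave $\det(I+A)$ unchanged---so no such care is required. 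Once the correct cyclic matrix is in hand, the evaluation and its identification with $f_{m+1}$ and $a_{n}$ are immediate.
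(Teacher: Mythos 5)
Your proposal is correct and follows exactly the route the paper attributes to the cited source: the paper itself gives no proof of this theorem (it is quoted from Merino), but states that both counts were obtained via the Matrix-Quasi-tree Theorem applied to the intersection matrices of $P_m$ and $C_n$, which is precisely your argument, and your tridiagonal recursion, your sign analysis around the cycle, and the final value $f_{n+1}+f_{n-1}-1-(-1)^{n}=a_n$ all check out (the cofactors you leave implicit for $\mathbb{W}_n$ agree with the paper's explicit computation of the closely related $\mathbb{W}^1_n$). The only cosmetic shortfall is that the $\mathbb{W}_n$ determinant is asserted rather than carried out, but the method plainly yields it.
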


Moreover, Merino \cite{Merino} posed the following problem:
\begin{problem}
Find the Lucas numbers as the number of spanning quasi-trees of a family of ribbon graphs.
\end{problem}

In this paper, we solve the problem by establishing a relation between the $(n-1)$-th Lucas number and the number of quasi-trees of a bouquet with
$n$ edges whose signed rotation is $$(1,2,3,2,1,4,3,5,4,\cdots,i, i-1,i+1,i,\cdots,n-1,n-2,n,n-1,n).$$

It is worth noticing that the edge set of a ribbon graph together with its spanning quasi-trees form a delta-matroid.  The connection between ribbon graph theory and delta-matroid theory, as well as the philosophy that delta-matroid theory generalises topological graph theory, is due to Chun, Moffatt, Noble, and Rueckriemen \cite{ChunJCTA, Chun}. We will give the Matrix-Quasi-tree Theorem for a bouquet with exactly one non-orientable loop by delta-matroid theory and verify that the number of quasi-trees of some classes of bouquets is related to the Fibonacci and Lucas numbers by further applications of the Matrix-Quasi-tree Theorem. Furthermore, we give alternative proofs of the number of quasi-trees of these bouquets by using the deletion-contraction relations of ribbon graphs.


\section{Ribbon graphs, partial dual and partial Petrial}

\begin{definition}[\cite{bollobas}]
A {\it ribbon graph} $G$ is a $($orientable or non-orientable$)$ surface with boundary,
represented as the union of two sets of topological discs, a set $V(G)$ of vertices, and a set $E(G)$ of edges,
subject to the following restrictions.
\begin{description}
\item[(1)] The vertices and edges intersect in disjoint line segments, we call them {\it common line segments} as in \cite{Deng};
\item[(2)] Each of such common line segments lies on the boundary of precisely one vertex and precisely one edge;
\item[(3)] Every edge contains exactly two such common line segments.
\end{description}
\end{definition}


A ribbon graph $H=(V(H), E(H))$ is a \emph{ribbon subgraph} of $G=(V(G), E(G))$ if $H$ can be obtained by deleting vertices and edges of $G$. If $V(H)=V(G)$, then $H$ is a \emph{spanning ribbon subgraph} of $G$.
A \emph{quasi-tree} is a ribbon graph with exactly one boundary component. Given a ribbon graph $G$, the number of spanning ribbon subgraphs of $G$ that are quasi-trees is denoted by $\kappa(G)$. Moffatt  \cite{Moffatt15} defined the \emph{one-vertex-joint} operation on two disjoint ribbon graphs
$P$ and $Q$, denoted by $P\vee Q$, in two steps:
\begin{description}
  \item[(i)] Choose an arc $p$ on the boundary of a vertex-disc $v_1$ of $P$ that lies between two consecutive ribbon ends, and choose another such arc $q$ on the boundary of a vertex-disc $v_2$ of $Q$.
  \item[(ii)] Paste vertex-discs $v_1$ and $v_2$ together by identifying the arcs $p$ and $q$.
\end{description}
Note that $\kappa(P\vee Q)=\kappa(P)\kappa(Q)$. A ribbon graph is \emph{non-orientable} if it contains a ribbon subgraph that is homeomorphic to a M\"{o}bius band, and is \emph{orientable} otherwise. An edge $e$ of a ribbon graph is a \emph{loop} if it is incident with exactly one vertex. A loop is \emph{non-orientable} if together with its incident vertex it forms a M\"{o}bius band, and is \emph{orientable} otherwise.

For a ribbon graph $G$ and $A\subseteq E(G)$,  the \emph{partial dual} \cite{Chmutov}, $G^{\delta(A)}$,  of $G$ with respect to $A$ is the ribbon graph obtained from $G$ by gluing a disc to $G$ along each boundary component of the spanning ribbon subgraph $(V (G), A)$ (such discs will be the vertex-discs of $G^{\delta(A)}$), removing the interiors of all vertex-discs of $G$ and keeping the edge-ribbons unchanged. These include the property that $(G^{\delta(A)})^{\delta(A)}=G$.

The \emph{partial Petrial} \cite{EM},
$G^{\tau(A)}$, of $G$ with respect to $A$ is the ribbon graph obtained from $G$ by adding a half-twist to each of the edges in $A$. Similarly, $(G^{\tau(A)})^{\tau(A)}=G$. For more detailed discussions of the ribbon graphs, partial duals and partial Petrials, see \cite{EM}.

A \emph{bouquet} is a ribbon graph with exactly one vertex.
It is observed in \cite{EM} that for any connected ribbon graph $G= (V, E)$, $G^{\delta(A)}$ is a bouquet if and only if $(V, A)$ is a spanning quasi-tree of $G$. One of the fundamental relations between $G$ and $G^{\delta(A)}$ is that both have the same number of spanning quasi-trees, that is, $\kappa(G^{\delta(A)})=\kappa(G)$, see \cite{ChunJCTA} (Theorem 5.1). Hence we shall restrict ourselves to bouquets.

A \emph{signed rotation} \cite{GT} of a bouquet is a cyclic ordering of the half-edges at the vertex and if the edge is an orientable loop, then we give the same sign $+$  or $-$ to the corresponding two half-edges, and give the different signs (one $+$, the other $-$) otherwise. The sign $+$ is always omitted. Given a ribbon graph $G= (V, E)$, and $e \in E$, the \emph{deletion} of $e$ in $G$ is $G\backslash e:=(V, E\backslash e)$. The \emph{contraction} of $e$ in $G$ is $G/ e:=G^{\delta(e)}\backslash e$. Table \ref{Fig3} \cite{EM} shows the local effect of deletion and contraction on a ribbon graph.

\begin{table}
  \centering
\caption{Operations on an edge $e$ (highlighted in bold) of a ribbon graph}\label{Fig3}
  \includegraphics[width=15cm]{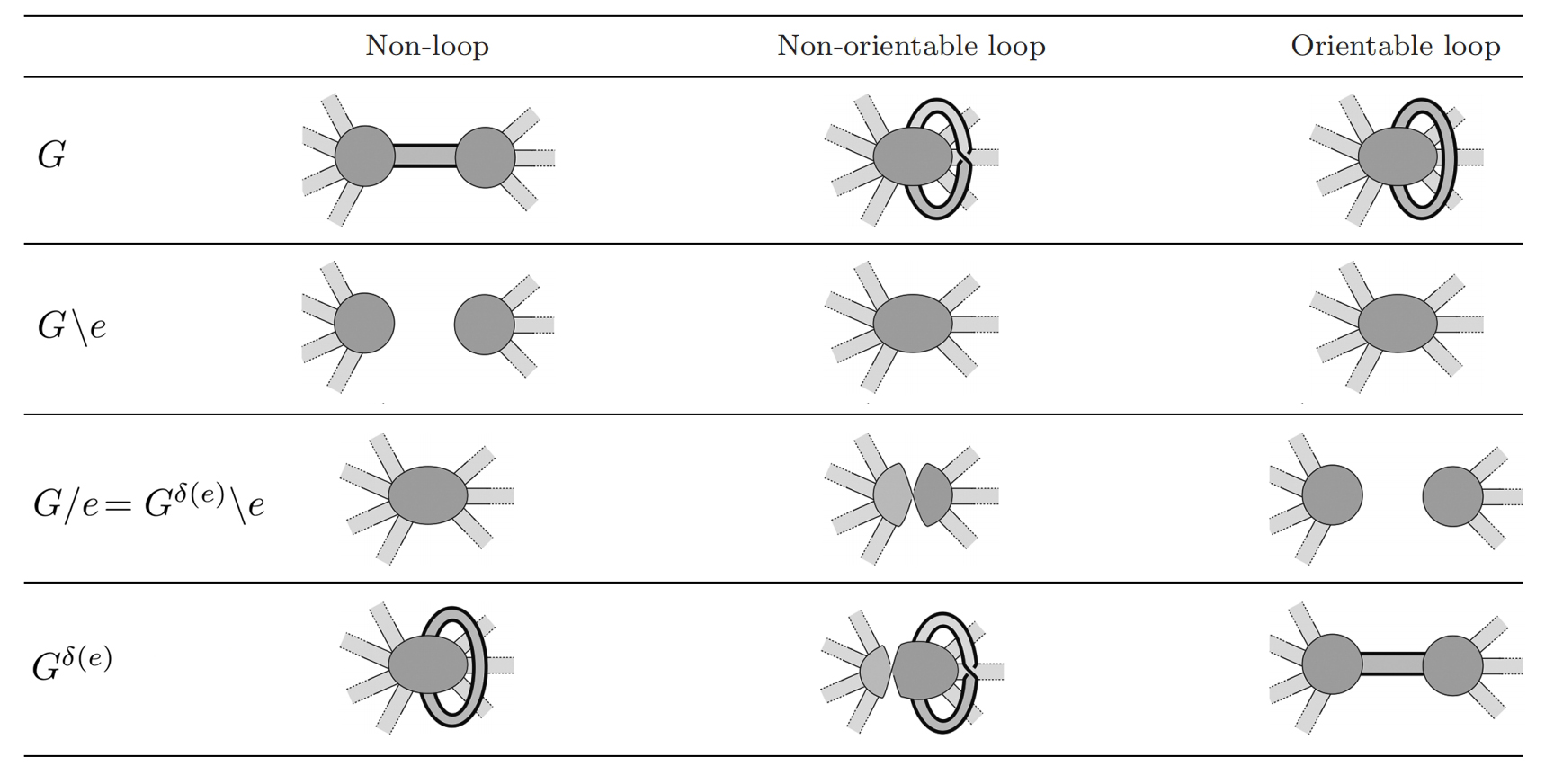}\\

\end{table}

\section{Delta-matroids and quasi-trees}

A set system $D=(E,\mathcal{F})$ is a finite set $E$ together with a subset $\mathcal{F}$ of the set $2^{E}$ of all subsets in $E$. The set $E$ is called the \emph{ground set} of the set system, and elements of $\mathcal{F}$ are its \emph{feasible sets}. $D$ is \emph{proper} if $\mathcal{F}\neq \emptyset$.

As introduced by Bouchet in \cite{AB1}, a \emph{delta-matroid} is a proper set system $D=(E, \mathcal{F})$ such that if $X, Y \in \mathcal{F}$ and $u\in X\Delta Y$, then there is $v\in X\Delta Y$ (possibly  $v=u$ ) such that $X\Delta \{u, v\}\in \mathcal{F}$.  Here $X\Delta Y:=(X\cup Y)\backslash (X\cap Y)$  is the usual symmetric difference of sets. It turns out that the edge set of a ribbon graph together with its spanning quasi-trees form a delta-matroid.

	

\begin{definition}[\cite{ChunJCTA}]
	Let $G = (V, E)$ be a ribbon graph, and let
\[\mathcal{F}:= \{F \subseteq E~|~F~\text{is the edge set of a spanning quasi-tree of} ~G\}.\]
 We call $D(G) = (E, \mathcal{F})$ the delta-matroid of $G$.
\end{definition}

Note that for any ribbon graph $G$, $\kappa(G)=|\mathcal{F}(D(G))|$. If the sizes of the feasible sets of a delta-matroid all have the same parity, then we say that the delta-matroid is \emph{even}. Otherwise, we say that the delta-matroid is \emph{odd}. For an orientable bouquet $B$, the delta-matroid $D(B)$ is even if and only if $B$ is orientable \cite{ChunJCTA}.

Let $D=(E, \mathcal{F})$ be a set system. For $A\subseteq E$, the \emph{twist} of $D$ with respect to $A$, denoted by $D*A$, is given by $$(E, \{A\Delta X~|~X\in \mathcal{F}\}).$$

Chun et al. \cite{Chun} showed that twisting and partial duality are equivalent on the delta-matroid level.

\begin{proposition}[\cite{Chun}]\label{tauandtwist}
	Let $G$ be a ribbon graph and $A\subseteq E(G)$. Then $$D(G)*A=D(G^{\delta(A)}).$$
\end{proposition}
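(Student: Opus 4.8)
The plan is to reduce the statement to the two structural facts about partial duality recalled in Section 2: the composition law and the characterization of when a partial dual is a bouquet. First I would observe that partial duality leaves the edge set unchanged, $E(G^{\delta(A)}) = E(G)$, so that $D(G)*A$ and $D(G^{\delta(A)})$ automatically share the ground set $E(G)$; it therefore remains only to match their feasible sets. By the definition of the twist, the feasible sets of $D(G)*A$ are $\{A\triangle X : X\in\mathcal{F}(D(G))\}$, and substituting $Y=A\triangle X$ (a bijection of $2^{E(G)}$ onto itself), the desired equality $\mathcal{F}(D(G)*A)=\mathcal{F}(D(G^{\delta(A)}))$ becomes the claim that, for every $Y\subseteq E(G)$, the spanning ribbon subgraph $(V,A\triangle Y)$ is a quasi-tree of $G$ if and only if $(V,Y)$ is a quasi-tree of $G^{\delta(A)}$. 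We may assume $G$ is connected, since a quasi-tree is connected, so if $G$ is disconnected then neither $G$ nor $G^{\delta(A)}$ has a spanning quasi-tree and both set systems have empty feasible collections, and there is nothing to prove.

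The key step is then a two-fold application of the fact (quoted in Section 2, from \cite{EM}) that for a connected ribbon graph $H$ and $S\subseteq E(H)$, the partial dual $H^{\delta(S)}$ is a bouquet exactly when $(V(H),S)$ is a spanning quasi-tree of $H$. Applying this with $H=G$ and $S=A\triangle Y$ shows $(V,A\triangle Y)$ is a quasi-tree of $G$ iff $G^{\delta(A\triangle Y)}$ is a bouquet; applying it with $H=G^{\delta(A)}$ (also connected, since partial duality preserves the number of components) and $S=Y$ shows $(V,Y)$ is a quasi-tree of $G^{\delta(A)}$ iff $(G^{\delta(A)})^{\delta(Y)}$ is a bouquet. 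Since the composition law for partial duals, $(G^{\delta(A)})^{\delta(Y)}=G^{\delta(A\triangle Y)}$ \cite{Chmutov} (the general form of the involution $(G^{\delta(A)})^{\delta(A)}=G$ recalled in Section 2), identifies these two bouquets, the two right-hand conditions coincide, hence so do the two left-hand ones, which is exactly the equivalence we needed.

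I do not expect a serious obstacle along this route: once the composition law and the bouquet characterization are in hand the argument is pure bookkeeping. If one instead wanted a proof avoiding the bouquet characterization, the natural alternative is to prove by induction on $|A|$ that $(V,B)$ in $G$ and $(V,A\triangle B)$ in $G^{\delta(A)}$ always have the same number of boundary components — peeling off one edge of $A$ at a time via the composition law and checking the single-edge case directly from the local description of $\delta(e)$ in Table \ref{Fig3} — and then specialize to one boundary component. In that version the single-edge case check is the only delicate point, since one must separate according to whether the edge is a loop of the spanning subgraph $(V,B)$ and, if so, whether that loop is orientable.
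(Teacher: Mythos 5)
Your proof is correct. The paper gives no proof of this proposition (it is quoted from Chun, Moffatt, Noble and Rueckriemen), but your argument is essentially the standard one: after matching ground sets and disposing of the disconnected case, the chain ``$(V,A\triangle Y)$ is a spanning quasi-tree of $G$ $\Leftrightarrow$ $G^{\delta(A\triangle Y)}=(G^{\delta(A)})^{\delta(Y)}$ is a bouquet $\Leftrightarrow$ $(V,Y)$ is a spanning quasi-tree of $G^{\delta(A)}$'' is exactly the right use of the composition law for partial duals together with the bouquet characterization recalled in Section~2, and it closes the argument with no gaps.
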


\begin{definition}[\cite{Moffatt}]\label{def01}
	The result of handle sliding of the element $a$ over the element $b$ is the set system $\widetilde{D}_{ab}=(E, \widetilde{\mathcal{F}}_{ab})$, where $$\widetilde{\mathcal{F}}_{ab}=\mathcal{F}\triangle \{F\cup a~|~F\cup b\in \mathcal{F} ~and~ F\subseteq  E\backslash  \{a,b\}\}.$$
\end{definition}

\begin{definition}[\cite{Lando17}]\label{def02}
	The result of exchanging handle ends of the element $a$ and the element $b$ is the set system ${D'}_{ab}=(E, {\mathcal{F'}}_{ab})$, where $${\mathcal{F'}}_{ab}=\mathcal{F}\triangle \{F\cup \{a,b\}~|~F\in \mathcal{F} ~and~ F\subseteq  E\backslash  \{a,b\}\}.$$
\end{definition}

\begin{proposition}[\cite{Deng2}]\label{commute}
	The result of first exchanging handle ends and then sliding handle of the element $a$ over the element $b$ is the set system ${\widetilde{D'}}_{ab}=(E, {\mathcal{\widetilde{F'}}}_{ab})$, where $${\mathcal{\widetilde{F'}}}_{ab}=\mathcal{F}\triangle \{F\cup \{a,b\}~|~F\in \mathcal{F} ~and~ F\subseteq  E\backslash  \{a,b\}\}\triangle \{F\cup \{a\}~|~F\cup \{b\}\in \mathcal{F} ~and~ F\subseteq  E\backslash  \{a,b\}\}.$$
	Similarly, the result of first handle sliding  and then exchanging handle ends of the element $a$ and the element $b$ is the set system ${\widetilde{D}'}_{ab}={\widetilde{D'}}_{ab}$.
\end{proposition}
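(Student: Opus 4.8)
The plan is to prove both identities by a single, essentially bookkeeping, computation carried out entirely at the level of set systems. Partition the power set $2^{E}$ according to the trace on $\{a,b\}$: every $S\subseteq E$ is uniquely $F\cup X$ with $F\subseteq E\backslash\{a,b\}$ and $X\subseteq\{a,b\}$, which splits $2^{E}$ into four \emph{layers} indexed by $X\in\{\emptyset,\{a\},\{b\},\{a,b\}\}$. For a fixed $F\subseteq E\backslash\{a,b\}$ encode a set system $\mathcal F$ by the quadruple of Boolean values $(f_{\emptyset},f_{a},f_{b},f_{ab})$ recording whether $F$, $F\cup a$, $F\cup b$, $F\cup\{a,b\}$ belong to $\mathcal F$; a set system on $E$ is determined by the map $F\mapsto(f_{\emptyset},f_{a},f_{b},f_{ab})$, and symmetric difference of set systems corresponds to coordinatewise addition mod $2$ of these quadruples.

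First I would record the effect of the two basic operations on this data. By Definition~\ref{def02}, exchanging handle ends takes the symmetric difference with $\{F\cup\{a,b\}\mid F\in\mathcal F,\ F\subseteq E\backslash\{a,b\}\}$, so it sends $(f_{\emptyset},f_{a},f_{b},f_{ab})$ to $(f_{\emptyset},f_{a},f_{b},f_{ab}+f_{\emptyset})$: only the $\{a,b\}$-layer changes, and the change is triggered by the $\emptyset$-layer. By Definition~\ref{def01}, handle sliding of $a$ over $b$ takes the symmetric difference with $\{F\cup a\mid F\cup b\in\mathcal F,\ F\subseteq E\backslash\{a,b\}\}$, so it sends $(f_{\emptyset},f_{a},f_{b},f_{ab})$ to $(f_{\emptyset},f_{a}+f_{b},f_{b},f_{ab})$: only the $\{a\}$-layer changes, and the change is triggered by the $\{b\}$-layer. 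The key point is that the layer modified by either operation is disjoint from the layer read off by the other.

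Next I would compose. Applying first the exchange and then the slide, the slide acts on the new system but its triggering value ``$F\cup b$ feasible'' is still $f_{b}$, since the exchange left the $\{b\}$-layer alone; hence the composite sends $(f_{\emptyset},f_{a},f_{b},f_{ab})$ to $(f_{\emptyset},\,f_{a}+f_{b},\,f_{b},\,f_{ab}+f_{\emptyset})$. Applying first the slide and then the exchange, the exchange acts on $\widetilde D_{ab}$ but its triggering value ``$F$ feasible'' is still $f_{\emptyset}$, since the slide left the $\emptyset$-layer alone; hence this composite sends $(f_{\emptyset},f_{a},f_{b},f_{ab})$ to the same quadruple $(f_{\emptyset},\,f_{a}+f_{b},\,f_{b},\,f_{ab}+f_{\emptyset})$. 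This gives ${\widetilde{D}'}_{ab}={\widetilde{D'}}_{ab}$. To finish, I would verify that the displayed closed formula yields exactly this quadruple: in
\[\mathcal F\ \triangle\ \{F\cup\{a,b\}\mid F\in\mathcal F\}\ \triangle\ \{F\cup a\mid F\cup b\in\mathcal F\}\]
the second summand lives in the $\{a,b\}$-layer and flips $f_{ab}$ by $f_{\emptyset}$, the third lives in the $\{a\}$-layer and flips $f_{a}$ by $f_{b}$, and nothing else moves, reproducing $(f_{\emptyset},\,f_{a}+f_{b},\,f_{b},\,f_{ab}+f_{\emptyset})$.

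The computation itself is routine; the only thing requiring genuine care — and what I would flag as the main (minor) obstacle — is the bookkeeping of which set system each feasibility condition refers to once the first operation has been performed, i.e.\ justifying that in both orders the composite can be written with \emph{all} memberships taken in the original $\mathcal F$. The disjointness-of-layers observation of the second paragraph is precisely what licenses this, and once it is in place the two identities fall out simultaneously.
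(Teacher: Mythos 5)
Your proposal is correct. Note first that the paper itself supplies no proof of this proposition: it is imported verbatim from the cited reference [Deng2], so there is no in-paper argument to compare against. Judged on its own, your layer-by-layer bookkeeping is a complete and valid verification: for each fixed $F\subseteq E\backslash\{a,b\}$ the exchange operation only reads the $\emptyset$-layer and only writes the $\{a,b\}$-layer, while the slide only reads the $\{b\}$-layer and only writes the $\{a\}$-layer, so the read-set of each operation is disjoint from the write-set of the other; this is exactly what justifies replacing the membership conditions in the composite by memberships in the original $\mathcal{F}$, and it yields both the closed formula for $\widetilde{\mathcal{F}'}_{ab}$ and the commutativity $\widetilde{D}'_{ab}=\widetilde{D'}_{ab}$ in one stroke. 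Your four layers are precisely the partition $\mathcal{F}_{00},\mathcal{F}_{10},\mathcal{F}_{01},\mathcal{F}_{11}$ that the present paper uses in its proof of Theorem~\ref{setsystem2}, refined so as to track each individual $F$ rather than only cardinalities, so your argument is in the same spirit as the surrounding material and meshes with it cleanly.
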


\begin{definition}[\cite{Lando17}]
	We say that an invariant $f$ of set systems satisfies the {\it four-term relation} if for any set system $D$ and a pair of distinct elements $a$ and $b$ in its ground set we have
	\begin{equation*}\label{4term}
		f(D)+f(\widetilde{D'}_{ab})-f(D'_{ab})-f(\widetilde{D}_{ab})=0.
	\end{equation*}
\end{definition}

\begin{theorem}\label{setsystem2}
Let $D=(E,\mathcal{F})$ be a set system and $a,b\in E$. Then the cardinality of feasible sets satisfies the four-term relation, that is,
\[|\mathcal{F}|+|\widetilde{\mathcal{F}'}_{ab}|-|\mathcal{F}{'}_{ab}|-|\widetilde{\mathcal{F}}_{ab}|=0.\]	
\end{theorem}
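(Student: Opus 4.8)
\medskip
\noindent\textbf{Proof proposal.}
The plan is to establish the identity by a blockwise double-counting argument. I would partition the power set $2^{E}$ into $2^{|E|-2}$ four-element blocks, one block $\mathcal{B}_{F}:=\{F,\,F\cup\{a\},\,F\cup\{b\},\,F\cup\{a,b\}\}$ for each $F\subseteq E\setminus\{a,b\}$. Since each of the four families $\mathcal{F}$, $\widetilde{\mathcal{F}'}_{ab}$, $\mathcal{F}'_{ab}$, $\widetilde{\mathcal{F}}_{ab}$ is a subfamily of $2^{E}$ and the blocks partition $2^{E}$, the cardinality of any one of these families equals the sum over all $F$ of the number of its members lying in $\mathcal{B}_{F}$. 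Hence it suffices to prove the alternating identity one block at a time.

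The second step is to record, for a fixed $F$, the membership pattern of the four sets of $\mathcal{B}_{F}$ under each of the four families, using the Iverson-bracket bits $w=[F\in\mathcal{F}]$, $x=[F\cup\{a\}\in\mathcal{F}]$, $y=[F\cup\{b\}\in\mathcal{F}]$, $z=[F\cup\{a,b\}\in\mathcal{F}]$. Reading Definition \ref{def01} (handle sliding), Definition \ref{def02} (exchanging handle ends) and Proposition \ref{commute} directly, one checks that inside $\mathcal{B}_{F}$ the only set whose membership can be toggled by the handle slide of $a$ over $b$ is $F\cup\{a\}$ --- and it is toggled precisely when $y=1$ --- while the only set toggled by exchanging the handle ends of $a$ and $b$ is $F\cup\{a,b\}$, toggled precisely when $w=1$. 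Consequently, writing the membership bits at the positions $F,\ F\cup\{a\},\ F\cup\{b\},\ F\cup\{a,b\}$ and letting $\oplus$ denote addition modulo $2$, the pattern is $(w,x,y,z)$ for $\mathcal{F}$, it is $(w,\,x\oplus y,\,y,\,z)$ for $\widetilde{\mathcal{F}}_{ab}$, it is $(w,\,x,\,y,\,z\oplus w)$ for $\mathcal{F}'_{ab}$, and it is $(w,\,x\oplus y,\,y,\,z\oplus w)$ for $\widetilde{\mathcal{F}'}_{ab}$.

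The third step is the routine verification that the alternating sum of the corresponding bit-sums vanishes. Adding the contributions position by position, the $F$-position gives $w+w-w-w$, the $(F\cup\{a\})$-position gives $x+(x\oplus y)-x-(x\oplus y)$, the $(F\cup\{b\})$-position gives $y+y-y-y$, and the $(F\cup\{a,b\})$-position gives $z+(z\oplus w)-(z\oplus w)-z$, each of which is $0$. Therefore $|\mathcal{F}\cap\mathcal{B}_{F}|+|\widetilde{\mathcal{F}'}_{ab}\cap\mathcal{B}_{F}|-|\mathcal{F}'_{ab}\cap\mathcal{B}_{F}|-|\widetilde{\mathcal{F}}_{ab}\cap\mathcal{B}_{F}|=0$, and summing these blockwise identities over all $F\subseteq E\setminus\{a,b\}$ yields the theorem.

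I do not anticipate a genuine obstacle here; the only point demanding care is the bookkeeping in the second step --- namely, checking that the symmetric differences appearing in Definition \ref{def01} and Proposition \ref{commute} insert or delete, within any single block $\mathcal{B}_{F}$, only sets of one prescribed form, so that the toggling at each of the four positions is governed by exactly one of the bits $w$ and $y$. Once that is nailed down, everything reduces to the trivial four-position cancellation of the third step.
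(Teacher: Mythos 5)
Your proof is correct and is essentially the same argument as the paper's: both rest on the observation that the handle slide toggles membership only at sets of the form $F\cup\{a\}$ (governed by whether $F\cup\{b\}\in\mathcal{F}$) while the handle-end exchange toggles only sets of the form $F\cup\{a,b\}$ (governed by whether $F\in\mathcal{F}$), after which the four-term cancellation is pure bookkeeping. The only difference is organizational --- you verify the cancellation locally on each block $\mathcal{B}_F$, whereas the paper groups globally into the four subfamilies $\mathcal{F}_{00},\mathcal{F}_{10},\mathcal{F}_{01},\mathcal{F}_{11}$ --- but the underlying decomposition by $a,b$-membership is identical.
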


\begin{proof}
	For any $a,b\in E$,
let	$\mathcal{F}_{00}:=\{F~|~F\in \mathcal{F}~\text{and}~ a, b\notin F\}$,
$\mathcal{F}_{10}:=\{F~|~F\in \mathcal{F}, a\in F~\text{and}~b\notin F\}$,
$\mathcal{F}_{01}:=\{F~|~F\in \mathcal{F}, a\notin F~\text{and}~b\in F\}$ and
$\mathcal{F}_{11}:=\{F~|~F\in \mathcal{F}~\text{and}~ a, b\in F\}$. Then
$$\mathcal{F}=\mathcal{F}_{00}\cup\mathcal{F}_{10}\cup\mathcal{F}_{01}\cup\mathcal{F}_{11}.$$
By Proposition \ref{commute}, Definitions \ref{def01} and \ref{def02}, we have
\[\widetilde{\mathcal{F}}_{ab}=\mathcal{F}_{00}\cup\mathcal{F}_{01}\cup\mathcal{F}_{11}\cup(\mathcal{F}_{10}\triangle \{F\cup a~|~F\cup b\in \mathcal{F} ~and~ F\subseteq  E\backslash  \{a,b\}\}),\]
\[{\mathcal{F}}'_{ab}=\mathcal{F}_{00}\cup\mathcal{F}_{01}\cup\mathcal{F}_{10}\cup(\mathcal{F}_{11}\triangle \{F\cup \{a,b\}~|~F\in \mathcal{F} ~and~ F\subseteq  E\backslash  \{a,b\}\}),\]	
\begin{eqnarray*}
&&{\mathcal{\widetilde{F'}}}_{ab}=\mathcal{F}_{00}\cup\mathcal{F}_{01}\cup(\mathcal{F}_{11}\triangle \{F\cup \{a,b\}~|~F\in \mathcal{F} ~and~ F\subseteq  E\backslash  \{a,b\}\})\\
&&~~~~~~~~~\cup(\mathcal{F}_{10}\triangle \{F\cup \{a\}~|~F\cup \{b\}\in \mathcal{F} ~and~ F\subseteq  E\backslash  \{a,b\}\}).
\end{eqnarray*}	
It follows that
	\begin{eqnarray*}
|\mathcal{F}|+|\mathcal{\widetilde{F'}}_{ab}|&=&2|\mathcal{F}_{00}|+2|\mathcal{F}_{01}|+|\mathcal{F}_{11}|+|\mathcal{F}_{10}|+|\mathcal{F}_{11}\triangle \{F\cup \{a,b\}~|~F\in \mathcal{F} ~and~ F\subseteq  E\backslash  \{a,b\}\}|\\
	& &+|\mathcal{F}_{10}\triangle \{F\cup \{a\}~|~F\cup \{b\}\in \mathcal{F} ~and~ F\subseteq  E\backslash  \{a,b\}\}|\\
		&=&|\mathcal{\widetilde{F}}_{ab}|+|\mathcal{F}'_{ab}|.
\end{eqnarray*}
\end{proof}

\begin{proposition}[\cite{Lando17,Moffatt}]\label{slideab}
		Let $G = (V, E)$ be a ribbon graph, $a$ and $b$ be distinct edges of $G$ with neighbouring ends, let $\widetilde{G}_{ab}$ and $G'_{ab}$ be the ribbon graphs obtained from $G$ by handle sliding $a$ over $b$ and by exchange handle ends $a$ and $b$, respectively. Then
		\[D(\widetilde{G}_{ab})=\widetilde{D(G)}_{ab}.\]
		and \[D(G'_{ab})=D(G)'_{ab}.\]
\end{proposition}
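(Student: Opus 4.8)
The two claimed equalities are identities of set systems on $E$, and by Definitions~\ref{def01} and~\ref{def02} they unwind as follows. Write ``$S$ is feasible in $H$'' for ``$(V,S)$ is a quasi-tree of the ribbon graph $H$''. Then $D(\widetilde{G}_{ab})=\widetilde{D(G)}_{ab}$ asserts that (i) for every $F\subseteq E\setminus\{a,b\}$ the set $F\cup\{a\}$ is feasible in $\widetilde{G}_{ab}$ if and only if exactly one of $F\cup\{a\}$ and $F\cup\{b\}$ is feasible in $G$, while every subset not of the form $F\cup\{a\}$ keeps its feasibility; and $D(G'_{ab})=D(G)'_{ab}$ asserts that (ii) for every $F\subseteq E\setminus\{a,b\}$ the set $F\cup\{a,b\}$ is feasible in $G'_{ab}$ if and only if exactly one of $F\cup\{a,b\}$ and $F$ is feasible in $G$, while every subset not of the form $F\cup\{a,b\}$ keeps its feasibility. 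The plan is to prove (i) and (ii).

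The ``keeps its feasibility'' parts are routine. For every subset that does not occur in the symmetric differences of Definitions~\ref{def01} and~\ref{def02}, the corresponding spanning ribbon subgraph of $G$ and of the modified ribbon graph are related either by an ambient isotopy of the surface --- the operation merely pushes a single ribbon end past an adjacent empty arc of a vertex boundary, and the neighbouring-ends hypothesis is exactly what makes that arc empty --- or, in the case of handle sliding $a$ over $b$ when both $a$ and $b$ lie in the subset, by an ordinary handle slide of two ribbons. Each of these is a homeomorphism, so the number of boundary components, hence feasibility, is unchanged; this is precisely why those subsets are left alone by the set-system operations.

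The substance of the proposition is (i) and (ii) in the remaining cases. I would prove them by a local analysis of boundary components. Fix $F\subseteq E\setminus\{a,b\}$ and put $H=(V,F)$. The neighbouring-ends hypothesis fixes the local picture around the common arc, and the numbers of boundary components of $(V,F)$, $(V,F\cup a)$, $(V,F\cup b)$ and $(V,F\cup\{a,b\})$ are then determined by how the two feet of $a$ and the two feet of $b$ are distributed among the boundary components of $H$; since the relevant foot of $a$ lies next to the relevant foot of $b$, only a handful of configurations occur. In each one I would trace the boundary curves explicitly --- recording any half-twist that the handle slide adds to the ribbon $a$ --- and read off the claimed ``exactly one'' equivalence. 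An alternative route, when $G$ is a bouquet: encode $D(G)$ via Bouchet's criterion that $F$ is feasible exactly when the principal submatrix $A(G)[F]$ is nonsingular over $\mathrm{GF}(2)$ (with a $1$ on the diagonal for each non-orientable loop), note that exchanging the handle ends of $a$ and $b$ toggles the $(a,b)$ and $(b,a)$ entries of $A(G)$ while handle sliding $a$ over $b$ replaces $A(G)$ by a matrix congruent to it through the elementary matrix $I+E_{ab}$ up to a diagonal correction, and then obtain (i) and (ii) from short cofactor or Schur-complement identities; the general case reduces to a bouquet by a partial dual.

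I expect the main obstacle to be exactly this substantive step --- and within it the topological bookkeeping: pinning down the precise combinatorial effect of the two operations, above all the half-twist contributed by the handle slide, which is what switches the orientability and hence the parity of the boundary count of $(V,F\cup a)$; and then organising the finite case check (or, in the matrix version, setting up the $\mathrm{GF}(2)$-matrix dictionary and checking it is compatible with partial duality). Once the local picture is fixed, each individual case is a one-line boundary trace or determinant computation.
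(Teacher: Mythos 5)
The paper does not prove Proposition \ref{slideab}: it is quoted from \cite{Lando17,Moffatt} and used as a black box, so there is no in-paper argument to measure yours against. Judged on its own, your reformulation is correct. You unwind Definitions \ref{def01} and \ref{def02} accurately (the symmetric differences toggle exactly the sets $F\cup\{a\}$, respectively $F\cup\{a,b\}$, with $F\subseteq E\setminus\{a,b\}$, and the membership condition is the ``exactly one'' statement you give), you correctly isolate which subsets are untouched, and your justification that those subsets keep their feasibility --- isotopy when the moved ribbon end travels past an arc unoccupied in the subgraph, and a genuine handle slide (hence a homeomorphism of the spanning subsurface) when both $a$ and $b$ lie in the subset --- is sound and is indeed where the neighbouring-ends hypothesis enters.

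The gap is that the substantive step, the ``exactly one'' equivalences for sets of the form $F\cup\{a\}$ under handle sliding and $F\cup\{a,b\}$ under end exchange, is only announced, not carried out; and that step is the entire content of the proposition. You say you would ``trace the boundary curves explicitly'' through ``a handful of configurations,'' but you neither enumerate the configurations (how the relevant feet of $a$ and $b$ are distributed over the boundary components of $(V,F)$, and whether each ribbon carries a half-twist) nor perform a single trace, and it is exactly there that the subtleties you yourself flag live --- in particular the half-twist that sliding $a$ over a non-orientable $b$ imparts to $a$, which flips the parity of the boundary count of $(V,F\cup\{a\})$. The matrix alternative is likewise only gestured at: it presupposes Bouchet's representation of $D(B)$ for a bouquet $B$ by nonsingularity of principal submatrices over $GF(2)$, a verification that the two operations act on the interaction matrix by the congruences you name, and a compatibility check with partial duality to reduce a general ribbon graph to a bouquet; none of this is supplied. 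What you have is a correct skeleton with the load-bearing computation missing; to turn it into a proof you must either complete the finite boundary-tracing case analysis or fully set up and verify the $GF(2)$-matrix dictionary.
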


In the next example, we show that the number of spanning quasi-trees of some ribbon graphs can be obtained by Theorem \ref{setsystem2} and Proposition \ref{slideab}.
\begin{example}
We have the number of quasi-trees of the  bouquet $\widetilde{(\mathbb{F}_{n})}_{1,2}~(n\geq 3)$ by the number of quasi-trees of $\mathbb{F}_{n}$, $(\mathbb{F}_{n})'_{1,2}$ and $\widetilde{(\mathbb{F}_{n})'}_{1,2}$.
Since $(\mathbb{F}_{n})'_{1,2}=\mathbb{F}_{n-1}\vee \mathbb{F}_{1}$ and   $\widetilde{(\mathbb{F}_{n})'}_{1,2}=\mathbb{F}'_{n}$, it follows that
	\begin{eqnarray*} \kappa(\widetilde{(\mathbb{F}_{n})}_{1,2})&=&\kappa({\mathbb{F}_{n}})+\kappa(\widetilde{(\mathbb{F}_{n})'}_{1,2})-\kappa((\mathbb{F}_{n})'_{1,2})\\
	&=&f_{n+1}+\ell_{n-1}-f_{n}\\
    &=&f_{n+1}+f_{n-2},
\end{eqnarray*}
where the number of quasi-trees of $\mathbb{F}_{n}$, $(\mathbb{F}_{n})'_{1,2}$ and $\widetilde{(\mathbb{F}_{n})'}_{1,2}$ are $f_{n+1}$, $f_{n}$ and $\ell_{n-1}$, respectively, see Table \ref{tab:my_label}.
\end{example}

\section{Matrix-Quasi-tree Theorem}
A \emph{chord diagram} $D$ consists of $2n$ cyclically ordered points in a core circle  together with $n$ straight line segments, called \emph{chords}, that join pairwise disjoint pairs of points $\{a_i, b_i\}$, $1 \leq i\leq n$. Two chords \emph{intersect} if the four endpoints of the chords interlace. The \emph{intersection graph} of a chord diagram $D$ is the graph $G(D)=(V,E)$ where $V$ is the set of chords, and where $uv\in E$ if and only if the chords $u$ and $v$ intersect.

A chord diagram is \emph{framed} if a map (a framing) from the set of chords to $\mathbb{Z}/ 2\mathbb{Z}$ is given, i.e., every chord is endowed with 0 or 1. 
There is a natural way to associate a framed chord diagram $F(B)$ with a bouquet $B$: take the boundary of the vertex as the circle, and place a chord between the two
ends of each edge of $B$. A chord of $F(B)$ is endowed with 0, if the corresponding ribbon loop of $B$ is orientable, and endowed with 1 otherwise.

The \emph{intersection matrix} $A(D)$ of a framed chord diagram $D =\{\{a_{i},b_{i}\}~|~ 1
\leq i\leq n\}$ is constructed as follows.
First, choose an arbitrary ordered pair $(a_{i}, b_{i})$ or $(b_{i},a_{i})$ for each chord $\{a_{i},b_{i}\}$. The entry $A_{i,i}$ is 1 if the corresponding chord with framing 1, and entry $A_{i,i}$ is 0 otherwise. For $i < j$, entry $A_{i,j}$ is 1 if the chords intersect and the endpoints in the corresponding ordered pairs are in cyclic order $a_{i}, a_{j}, b_{i}, b_{j}$, and $-1$ if the cyclic order is $a_{i}, b_{j}, b_{i}, a_{j}$, and entry $A_{i,j}$ is 0 otherwise. For $j < i$, $A_{i,j}=-A_{j,i}$.

More important for us is the following Matrix-Quasi-tree Theorem for orientable bouquets
obtained in many different contexts by different authors \cite{Bouchet87,Lauri,Macris,Merino2023}.

\begin{theorem}[\cite{Bouchet87,Lauri,Macris,Merino2023}]$($Matrix-Quasi-tree Theorem$)$\label{key1}
	Given an orientable bouquet $B$ with $n$ edges,  $$\kappa(B)=\det(I_{n} + A(F({B}))).$$
\end{theorem}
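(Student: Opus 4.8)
The plan is to reduce the orientable case to a purely combinatorial/algebraic identity and then verify that identity by induction on the number of edges, using the deletion–contraction structure of bouquets together with the analogous recursion for the determinant. First I would fix a bouquet $B$ with $n$ edges and note that, because $B$ is orientable, the delta-matroid $D(B)$ is even (by the cited fact from \cite{ChunJCTA}), so the feasible sets — the spanning quasi-trees — all have even size, which is exactly what one needs to line up the signs appearing in $\det(I_n+A(F(B)))$. The key structural observation is that an edge $e$ of $B$ is either a trivial (contractible) orientable loop, in which case $B = B' \vee \mathbb{F}_1$ for a smaller bouquet $B'$ and $\kappa(B) = \kappa(B')$, with the matrix $A(F(B))$ being block-diagonal with a $1\times 1$ zero block for $e$, so $\det(I_n+A(F(B))) = \det(I_{n-1}+A(F(B')))$; or $e$ is intertwined with some other chord, and then one can perform a handle-sliding/exchange move (Proposition~\ref{slideab}) to simplify the chord diagram. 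So the induction would be organized around these moves.

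The main technical step is the inductive one: I would pick an edge $e$ and an edge $g$ whose ends neighbour those of $e$, and compare $\kappa(B)$, $\kappa(\widetilde{B}_{eg})$, $\kappa(B'_{eg})$ and $\kappa(\widetilde{B'}_{eg})$ via the four-term relation of Theorem~\ref{setsystem2} (together with Proposition~\ref{slideab} which says these bouquet operations realize the corresponding set-system operations on $D(B)$). On the matrix side, the handle-slide and exchange operations correspond to adding/subtracting rows and columns of $A(F(B))$ — these are precisely the row and column operations that leave $\det(I_n+A)$ invariant or transform it in the same four-term pattern — so one gets a matching four-term relation for $\det(I_n+A(F(\cdot)))$. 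Since both $\kappa$ and $B\mapsto\det(I_n+A(F(B)))$ satisfy the same four-term relation and agree on the base cases (the one-edge bouquets, and more generally bouquets that are one-vertex joins of trivial loops, where both sides equal $1$), they agree for all orientable bouquets. An alternative, and perhaps cleaner, route is to invoke the Matrix–Tree–type expansion directly: expand $\det(I_n+A)$ as a sum over subsets $S\subseteq E$ of $\det(A[S])$ (principal minors), observe that for a skew-symmetric integer matrix $\det(A[S])$ is the square of the Pfaffian when $|S|$ is even and $0$ when $|S|$ is odd, and then identify the nonzero principal minors with the spanning quasi-trees of $B$ — each such minor equalling $1$ — via the correspondence between nonsingular principal submatrices of the intersection matrix and one-boundary-component spanning subsurfaces.

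The hard part will be making the combinatorial identification precise: showing that a subset $S\subseteq E(B)$ gives a quasi-tree (a spanning ribbon subgraph with exactly one boundary component) if and only if the principal submatrix $(I_n+A(F(B)))[S]$ — equivalently the relevant minor — is nonsingular, and that in that case it contributes exactly $+1$ to the expansion. This is where orientability is essential (it forces the right parity so that no cancellation occurs between $\pm 1$ terms and so that the Pfaffian squares behave), and it is the step that genuinely uses the structure of the intersection matrix of a chord diagram rather than formal properties of determinants. I would handle it by the deletion–contraction bookkeeping above, tracking how both $\kappa$ and the minor expansion change when an edge is deleted versus contracted, since Table~\ref{Fig3} gives the local picture of these operations and Proposition~\ref{tauandtwist} controls the effect of contraction (partial duality) on the delta-matroid side.
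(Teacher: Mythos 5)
First, a point of comparison: the paper does not prove Theorem \ref{key1} at all --- it is imported from \cite{Bouchet87,Lauri,Macris,Merino2023} --- so there is no in-paper argument to measure yours against. Judged on its own, your ``alternative route'' is the correct skeleton of the standard proof: expand $\det(I_{n}+A(F(B)))=\sum_{S\subseteq E}\det(A(F(B))[S])$ over principal minors and show that $\det(A(F(B))[S])$ equals $1$ exactly when $S$ is the edge set of a spanning quasi-tree and $0$ otherwise (this is Proposition \ref{orient} of the paper, itself quoted from \cite{Merino2023}). But that equivalence \emph{is} the theorem. Your Pfaffian observation only shows that odd-size principal minors vanish and even-size ones are non-negative squares, so no cancellation occurs; it does not identify the nonsingular principal submatrices with the quasi-trees, nor show that each nonzero minor equals $1$ rather than a larger square. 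The latter is Bouchet's principal unimodularity theorem for interlacement matrices of chord diagrams \cite{Bouchet87}, a genuinely nontrivial result; the former requires relating the rank of $A(F(B))[S]$ to the number of boundary components of $(V,S)$. You explicitly flag this as ``the hard part'' and propose to handle it ``by deletion--contraction bookkeeping,'' but no such bookkeeping is carried out, so the gap sits precisely at the essential step.

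The other route, via the four-term relation, does not work as described. Theorem \ref{setsystem2} is a linear relation among the quasi-tree counts of \emph{four} bouquets $B$, $\widetilde{B}_{eg}$, $B'_{eg}$, $\widetilde{B'}_{eg}$; to run an induction you would need three of the four to be smaller in some well-founded measure, and handle slides and end exchanges preserve the number of edges, so no such measure is supplied. You would also have to prove the matching four-term relation for $B\mapsto\det(I_{n}+A(F(B)))$, which you assert via ``row and column operations'' but do not verify; it is not a formal property of determinants. Finally, the proposed base cases (one-vertex joins of trivial loops) are genus zero, and the four-term relation by itself does not propagate the identity from those bouquets to an arbitrary orientable one. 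If you want a self-contained proof, the minor-expansion route is the one to complete, and the two missing lemmas to supply are exactly principal unimodularity and the quasi-tree/nonsingularity correspondence.
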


The fact that $\det(I_{n} + A(F({B})))$ does not change if $a_i$ and $b_i$ are interchanged in the ordered pair $(a_i, b_i)$ was proved in \cite{Bouchet87, Naji}. Throughout this paper, we will often write $A(B)$ for $A(F(B))$ when the framed chord diagram of $B$ is clear from the context.

Let $D=(E, \mathcal{F})$ be a set system and $e\in E$. Denote the set of the elements of $\mathcal{F}$ containing $e$ by $\mathcal{F}_{e}$ and let $\mathcal{F}_{\overline{e}}:=\mathcal{F}\backslash \mathcal{F}_{e}$.
We define \emph{loop complementation} of $D$ on $e$, denoted by $D+e$, as $(E, \mathcal{F''})$,
where $$\mathcal{F''}=\mathcal{F}
\triangle \{F \cup \{e\}~|~F \in \mathcal{F}_{\overline{e}}\}.$$
If $D$ is an even delta-matroid, then for any $F_{1}\in \mathcal{F}$ and $F_{2}\in  \{F\cup e| F\in \mathcal{F}_{\overline{e}}\}$, $|F_{1}|$ and $|F_{2}|$ have different parity. Therefore,
$\mathcal{F''}=\mathcal{F}\cup \{F \cup \{e\}~|~F \in \mathcal{F}_{\overline{e}}\}.$ Hence,
\begin{equation}\label{equ1}
  |\mathcal{F}''|=|\mathcal{F}|+|\mathcal{F}_{\overline{e}}|.
\end{equation}

\begin{proposition}[\cite{Chun}]\label{tauandloopcom}
	Let $G$ be a ribbon graph and $e\in E(G)$. Then $$D(G)+e=D(G^{\tau(e)}).$$
\end{proposition}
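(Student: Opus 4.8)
The plan is to prove the identity $D(G)+e=D(G^{\tau(e)})$ of set systems by showing, for every $F\subseteq E(G)$, that $F$ is feasible in $D(G)+e$ if and only if $F$ is the edge set of a spanning quasi-tree of $G^{\tau(e)}$. Writing $\mathcal{F}:=\mathcal{F}(D(G))$, I would first unpack loop complementation: from $\mathcal{F}''=\mathcal{F}\triangle\{F'\cup\{e\}\mid F'\in\mathcal{F}_{\overline{e}}\}$ one sees that $F\in\mathcal{F}(D(G)+e)$ exactly when either $e\notin F$ and $F\in\mathcal{F}$, or $e\in F$ and exactly one of the two sets $F$ and $F\setminus e$ lies in $\mathcal{F}$ (for $e\in F$ the membership $F\in\{F'\cup\{e\}\mid F'\in\mathcal{F}_{\overline{e}}\}$ is equivalent to $F\setminus e\in\mathcal{F}$). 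Since a spanning ribbon subgraph is a quasi-tree precisely when it has one boundary component, everything will reduce to comparing the numbers of boundary components of the spanning subgraphs $(V,F)$ sitting inside $G$ and inside $G^{\tau(e)}$.

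The case $e\notin F$ is immediate: the spanning subgraph $(V,F)$ never meets $e$, so the half-twist placed on $e$ in forming $G^{\tau(e)}$ is invisible, $(V,F)$ is literally the same ribbon graph in $G$ and in $G^{\tau(e)}$, and it is a spanning quasi-tree of one iff of the other; this matches the first clause above. For $e\in F$, I would set $F_0:=F\setminus e$ and $H:=(V,F_0)$, and note that the subgraph $(V,F)$ of $G^{\tau(e)}$ differs from the subgraph $(V,F)$ of $G$ by a single extra half-twist on the ribbon $e$, both being obtained from $H$ by attaching the ribbon $e$ along a fixed pair of arcs $p,q$ on the boundary of $H$. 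Here I would invoke the classical local rule for how attaching a ribbon affects boundary components (see \cite{EM,bollobas}), in the form: if $p$ and $q$ lie on different boundary components of $H$, then attaching $e$ merges them and so both the $G$-version and the $G^{\tau(e)}$-version of $(V,F)$ have $|\partial H|-1$ boundary components; while if $p$ and $q$ lie on the same boundary component of $H$, then one of the two versions has $|\partial H|+1$ boundary components and the other has exactly $|\partial H|$ (because a band with an even, respectively odd, number of half-twists attached within a single boundary component splits it, respectively keeps it single).

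It then remains to run a short case check using only these counts. If $p,q$ lie on different components of $H$, then $|\partial H|\ge 2$, so $F_0\notin\mathcal{F}$; and $(V,F)$ is a spanning quasi-tree --- in $G$ equivalently in $G^{\tau(e)}$ --- iff $|\partial H|=2$. Hence ``exactly one of $F,F_0$ is in $\mathcal{F}$'' coincides with ``$F\in\mathcal{F}$'', which coincides with ``$|\partial H|=2$'', which coincides with ``$F$ is the edge set of a spanning quasi-tree of $G^{\tau(e)}$''. If instead $p,q$ lie on the same component of $H$, the two versions of $(V,F)$ have boundary-component counts $|\partial H|$ and $|\partial H|+1$ in some order; distinguishing the subcases $|\partial H|=1$ and $|\partial H|\ge 2$, one checks in each that ``exactly one of $F,F_0$ lies in $\mathcal{F}$'' holds if and only if the $G^{\tau(e)}$-version of $(V,F)$ has a single boundary component --- and, conveniently, this verification never requires knowing which of the two versions of $(V,F)$ carries the extra half-twist. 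Combining this with the $e\notin F$ case yields $D(G)+e=D(G^{\tau(e)})$.

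The only genuinely topological input is the local boundary-component rule for attaching a (possibly half-twisted) ribbon, which is classical and in any case can be re-derived in one line by tracing the boundary curves through the attached band; I expect this to be the only real content, the rest being bookkeeping. The one place to be careful is exactly that bookkeeping when $e$ is already a non-orientable edge of $G$, so that ``adding a half-twist'' must be read as changing the twisting by one half-turn; the case analysis above is arranged so that one never has to decide which of the two spanning subgraphs $(V,F)$ is the twisted one, which sidesteps the difficulty. Note finally that the argument uses no orientability hypothesis on $G$ or on $e$, so it applies to arbitrary ribbon graphs.
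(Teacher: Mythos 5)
The paper does not prove this proposition; it is quoted from \cite{Chun} without proof, so there is nothing internal to compare your argument against. Judged on its own, your proof is correct and is essentially the standard argument for this fact. The unpacking of loop complementation is right: since every member of $\{F'\cup\{e\}\mid F'\in\mathcal{F}_{\overline e}\}$ contains $e$, a set $F$ with $e\notin F$ survives the symmetric difference iff $F\in\mathcal{F}$, and a set with $e\in F$ lies in $\mathcal{F}''$ iff exactly one of $F$, $F\setminus e$ is feasible in $D(G)$. The topological input is also used correctly: the attaching segments of $e$ lie on the boundary of $H=(V,F\setminus e)$ because $e\notin F\setminus e$, and the band-attachment rule (merging when the two arcs sit on distinct boundary components of $H$, independent of twisting; a toggle between $|\partial H|$ and $|\partial H|+1$ when they sit on the same component, with the half-twist switching the outcome) is exactly the local analysis recorded in Table~\ref{Fig3}. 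Your observation that the same-component case can be settled without identifying which of the two spanning subgraphs carries the extra half-twist is the right way to organize the bookkeeping: when $|\partial H|=1$ the two versions realize boundary counts $1$ and $2$ in some order, so ``exactly one of $F,F\setminus e$ feasible'' and ``the $G^{\tau(e)}$-version is a quasi-tree'' are both equivalent to the $G$-version failing to be one; when $|\partial H|\ge 2$ everything is vacuously non-feasible. This matches the proof one finds in \cite{Chun}, so there is no gap.
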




Let $M$ be a square matrix over $\mathbb{R}$ with rows and columns indexed, in the same order, by some set $E$. For $X\subseteq E$, we let $M[X]$ submatrix given by the rows and columns indexed by $X$. We denote the submatrix of $M$ obtained by deleting row $i$ and column $j$ by $M[i, j]$. Recall that a \emph{unimodular matrix} is a matrix of determinant 1 or $-1$. A \emph{principal unimodular matrix} is such that every nonsingular principal submatrix is unimodular.

\begin{theorem}\label{quasitreeTH}$($Matrix-Quasi-tree Theorem$)$
Let $B$ be a bouquet with $n$ edges which contains exactly one non-orientable loop $e_{1}$. Then the number of quasi-trees of $B$ equals $\det(I_{n}+A(B))$.
\end{theorem}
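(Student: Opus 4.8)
The plan is to reduce the non-orientable case to the already-known orientable Matrix-Quasi-tree Theorem (Theorem \ref{key1}) via a partial Petrial on the unique non-orientable loop $e_1$. Write $B' := B^{\tau(e_1)}$. Since $e_1$ is the only non-orientable loop of $B$, adding a half-twist to $e_1$ makes it an orientable loop and changes nothing else, so $B'$ is an orientable bouquet with $n$ edges; moreover its framed chord diagram $F(B')$ differs from $F(B)$ only in that the chord of $e_1$ now has framing $0$ instead of $1$, while the chord geometry (hence all off-diagonal entries of the intersection matrix) is identical. Thus $A(B') = A(B) - E_{11}$, where $E_{11}$ is the matrix unit with a single $1$ in position $(1,1)$ corresponding to $e_1$. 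Applying Theorem \ref{key1} to $B'$ gives $\kappa(B') = \det(I_n + A(B'))$.

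Next I would relate $\kappa(B)$ to $\kappa(B')$ on the delta-matroid level. By Proposition \ref{tauandloopcom}, $D(B') = D(B^{\tau(e_1)}) = D(B) + e_1$, loop complementation on $e_1$. Since $B'$ is orientable, $D(B')$ is even (by the cited fact that $D(B)$ is even iff $B$ is orientable), so the two delta-matroids $D(B)$ and $D(B')=D(B)+e_1$ are related as in Equation \eqref{equ1}: writing $\mathcal{F}=\mathcal{F}(D(B'))$ and letting $\mathcal{F}_{\overline{e_1}}$ be its feasible sets avoiding $e_1$, we get $|\mathcal{F}(D(B))| = |\mathcal{F}| + |\mathcal{F}_{\overline{e_1}}|$ after re-reading the identity in the direction $D(B) = D(B')+e_1$ (loop complementation is an involution). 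Hence $\kappa(B) = \kappa(B') + |\mathcal{F}_{\overline{e_1}}(D(B'))|$. It remains to identify $|\mathcal{F}_{\overline{e_1}}(D(B'))|$ as the number of quasi-trees of $B' \backslash e_1$, which is just the bouquet on $E(B)\setminus\{e_1\}$; that is, $\kappa(B) = \kappa(B') + \kappa(B'\backslash e_1)$.

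The remaining work is a determinant identity: I must show
\[
\det(I_n + A(B)) = \det(I_n + A(B')) + \det(I_{n-1} + A(B'\backslash e_1)).
\]
Here $A(B) = A(B') + E_{11}$, and $A(B'\backslash e_1)$ is the principal submatrix of $A(B')$ obtained by deleting the row and column of $e_1$. So the identity I need is the purely linear-algebraic fact that for any $n\times n$ matrix $M$ (here $M = I_n + A(B')$) one has $\det(M + E_{11}) = \det(M) + \det(M[\{2,\dots,n\}])$. This is standard: expand $\det(M+E_{11})$ along its first row, using multilinearity to split off the $E_{11}$ contribution, which contributes exactly the first principal minor $\det(M[\{2,\dots,n\}])$. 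Thus the displayed identity holds, and combining the three ingredients — the orientable Matrix-Quasi-tree Theorem, the loop-complementation count, and the determinant cofactor identity — yields $\kappa(B) = \det(I_n + A(B))$.

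The main obstacle I anticipate is justifying cleanly that $A(B)$ and $A(B')$ genuinely differ only in the $(1,1)$-entry: one must check that the partial Petrial on $e_1$ leaves the signed rotation of all other edges, and the interlacement pattern of $e_1$ with them, untouched, so that the recipe for the off-diagonal entries of the intersection matrix produces identical values (and in particular that the same choice of ordered endpoint pairs can be used for both diagrams). Once that correspondence is nailed down, everything else is the routine delta-matroid bookkeeping and the one-line cofactor expansion above.
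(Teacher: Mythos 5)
Your proposal is correct and follows essentially the same route as the paper: reduce to the orientable bouquet $B^{\tau(e_1)}$ via the partial Petrial, use Proposition \ref{tauandloopcom} together with equation \eqref{equ1} to get $\kappa(B)=\kappa(B^{\tau(e_1)})+\kappa(B\backslash e_1)$, and then observe that $I_n+A(B)$ and $I_n+A(B^{\tau(e_1)})$ differ only in the $(1,1)$-entry so that a cofactor expansion yields $\det(I_n+A(B))=\det(I_n+A(B^{\tau(e_1)}))+\det(I_{n-1}+A(B\backslash e_1))$. The only cosmetic difference is that the paper expands along the first column rather than invoking multilinearity on the first row.
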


\begin{proof}

Note that $B^{\tau(e_{1})}$ is an orientable bouquet. Then $D(B^{\tau(e_{1})})=(E, \mathcal{F})$ is an even delta-matroid. By Proposition \ref{tauandloopcom},
$$D(B)=D((B^{\tau(e_{1})})^{\tau(e_{1})})=D(B^{\tau(e_{1})})+e_{1}.$$ Then by equation (\ref{equ1}), we have $$|\mathcal{F}(D(B))|=|\mathcal{F}''|=|\mathcal{F}|+|\mathcal{F}_{\overline{e_{1}}}|.$$	
Let $M:=I_{n}+A(B)$. First, we expand the determinant of $M$ along the first column. The submatrix $M[1, 1]$ is the matrix $I_{n-1}+A(B\backslash e_{1})$ whose determinant equals $\kappa(B\backslash e_{1})$ by Theorem \ref{key1}, that is, $$\det(M[1, 1])=\det(I_{n-1}+A(B\backslash e_{1}))=|\mathcal{F}_{\overline{e_{1}}}|.$$ Moreover,
\begin{eqnarray*}
		& &\det(I_{n}+A(B))= 2\cdot \det(M[1, 1])+\sum_{i=2}^{n}(-1)^{1+i}m_{i,1}\cdot \det(M[i,1])\\
		&=&  \det(I_{n-1}+A(B\backslash e_{1}))+\det(M[1, 1])+\sum_{i=2}^{n}(-1)^{1+i}m_{i,1}\cdot \det(M[i,1])\\
		&=&  \det(I_{n-1}+A({B}\backslash e_{1}))+\det((I_{n}+A({B^{\tau(e_{1})}}))[1, 1])+\sum_{i=2}^{n}(-1)^{1+i}m_{i,1}\cdot \det((I_{n}+A({B^{\tau(e_{1})}}))[i,1])\\	
		&=&  \det(I_{n-1}+A({B}\backslash e_{1}))+\det(I_{n}+A({B^{\tau(e_{1})}}))\\
		&=&|\mathcal{F}_{\overline{e_{1}}}|+|\mathcal{F}|,
\end{eqnarray*}
where the last equality follows from Theorem \ref{key1}.
Hence, $$\kappa(B)=|\mathcal{F}(D(B))|=\det(I_{n}+A(B)).$$
\end{proof}

\begin{remark}
Let $B$ be a bouquet with $n$ edges which contains exactly one non-orientable loop $e_{1}$. By Theorem \ref{quasitreeTH}, $\det(I_{n}+A(B))$ does not change if $a_i$ and $b_i$ are interchanged in the ordered pair $(a_i, b_i)$ in the process of obtaining intersection matrix $A(B)$.
\end{remark}

Merino et al. \cite{Merino2023} obtained the following statement by using directed connected maps and principally unimodular matrices.
\begin{proposition}[\cite{Merino2023}]\label{orient}
	Let $B$ be an orientable bouquet with $n$ edges and $X\subseteq E(B)$. Then
	\begin{eqnarray*}
		\det(A(B)[X])=\left\{\begin{array}{ll}
			1, & \mbox{if}~X~\mbox{is the edge set of a quasi-tree of}~B,\\
			0, & \mbox{otherwise}.
		\end{array}\right.
	\end{eqnarray*}
\end{proposition}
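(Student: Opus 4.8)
The plan is to derive the statement from the orientable Matrix-Quasi-tree Theorem (Theorem \ref{key1}) together with two structural properties of the intersection matrix $A(B)$ of an orientable bouquet: it is skew-symmetric with zero diagonal, and it is principally unimodular. First I would use the standard expansion of a perturbed determinant as a sum of principal minors,
\[
\det(I_{n}+A(B))=\sum_{X\subseteq E(B)}\det\bigl(A(B)[X]\bigr),
\]
so that by Theorem \ref{key1} the right-hand side equals $\kappa(B)$, the number of edge sets of spanning quasi-trees of $B$. Next, skew-symmetry forces $\det(A(B)[X])=\operatorname{Pf}(A(B)[X])^{2}\ge 0$ (and $=0$ when $|X|$ is odd), while principal unimodularity forces a nonsingular $A(B)[X]$ to have determinant $\pm 1$, hence $+1$ since it is a square. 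So $\det(A(B)[X])\in\{0,1\}$ for every $X$, and the displayed identity reads
\[
\#\{X\subseteq E(B):\det(A(B)[X])\neq 0\}=\kappa(B)=\#\{X\subseteq E(B):(V,X)\text{ is a quasi-tree}\}.
\]
It therefore suffices to prove the inclusion: whenever $(V,X)$ is a quasi-tree, $\det(A(B)[X])\neq 0$. Equal cardinalities and the $\{0,1\}$-dichotomy then upgrade it to the claimed equivalence, with the nonzero value automatically $1$.

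For the inclusion I would induct on $n=|E(B)|$, using that $A(B)[X]$ is the intersection matrix of the orientable sub-bouquet $(V,X)$, so quasi-treeness of $X$ is intrinsic and independent of the ambient bouquet. The base case $n=0$ is immediate, as is the case in which some edge $e$ lies outside $X$: then $A(B)[X]=A(B\setminus e)[X]$, the bouquet $B\setminus e$ is orientable with $n-1$ edges, and $X$ is a quasi-tree of it, so induction applies. This leaves the case $X=E(B)$, i.e. $B$ itself is an orientable quasi-tree (forcing $n$ even and $n\ge 2$). Fix an edge $e$. It must interlace some other edge $f$, since otherwise $B$ would be the one-vertex-join of $B\setminus e$ with a free orientable loop and would thus have $b(B\setminus e)+1\ge 2$ boundary components. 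The sub-bouquet $(V,\{e,f\})$, two interlacing untwisted loops, is a quasi-tree, so $A(B)[\{e,f\}]=\bigl(\begin{smallmatrix}0&\varepsilon\\-\varepsilon&0\end{smallmatrix}\bigr)$ is invertible. Now pass to the partial dual $B^{\delta(\{e,f\})}$: because $\{e,f\}$ is a quasi-tree it is a bouquet, its delta-matroid is the twist $D(B)\ast\{e,f\}$ by Proposition \ref{tauandtwist} and is even (twisting by a fixed set preserves evenness), so $B^{\delta(\{e,f\})}$ is orientable, and $E(B)\setminus\{e,f\}$ is one of its quasi-trees. Invoking the identification of the intersection matrix of a partial dual with the principal pivot transform of $A(B)$ on the block $\{e,f\}$, the Schur-complement (Tucker) determinant identity yields
\[
\det\bigl(A(B)\bigr)=\det\bigl(A(B)[\{e,f\}]\bigr)\cdot\det\bigl(A(B^{\delta(\{e,f\})})[E(B)\setminus\{e,f\}]\bigr)=\det\bigl(A(B^{\delta(\{e,f\})}\setminus\{e,f\})\bigr),
\]
and the right-hand side is nonzero by the inductive hypothesis, since $B^{\delta(\{e,f\})}\setminus\{e,f\}$ is an orientable quasi-tree with $n-2$ edges. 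This completes the induction.

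The steps I expect to be routine are the determinant expansion, the Pfaffian-square positivity, and the topological facts used along the way (two interlacing untwisted loops bound a single face; a loop interlacing nothing contributes exactly one extra boundary component; a partial dual by a quasi-tree of an orientable bouquet is again an orientable bouquet, via evenness of the twisted delta-matroid). The main obstacle is the linear-algebraic machinery underpinning the induction. First, one needs that the intersection matrix of an orientable bouquet is principally unimodular; this is nontrivial but citable, being part of Bouchet's theory of principally unimodular skew-symmetric matrices as deployed in \cite{Merino2023}. Second, and more delicate, one needs the precise correspondence — with all sign and cyclic-ordering conventions reconciled — between $A(B^{\delta(\{e,f\})})$ and the principal pivot transform $A(B)\ast\{e,f\}$, so that the Schur complement appearing above is genuinely the intersection matrix of the bouquet to which the inductive hypothesis is applied. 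Pinning down this correspondence cleanly (or, alternatively, replacing it by a direct argument matching the quasi-trees of $B^{\delta(\{e,f\})}$ with the nonvanishing principal minors of the Schur complement) is where the real work of the proof would lie.
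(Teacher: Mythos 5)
The paper does not actually prove Proposition~\ref{orient}: it imports it from \cite{Merino2023}, remarking only that it is obtained there ``by using directed connected maps and principally unimodular matrices.'' So there is no in-paper argument to compare yours against, and I assess your proposal on its own terms. Your global architecture is sound and is close in spirit to what \cite{Merino2023} does: the identity $\det(I_n+A(B))=\sum_{X}\det(A(B)[X])$ together with Theorem~\ref{key1} gives $\sum_{X}\det(A(B)[X])=\kappa(B)$; skew-symmetry with zero diagonal makes each summand a nonnegative integer (a Pfaffian squared, vanishing for odd $|X|$); and then everything reduces to the single inclusion ``$X$ a quasi-tree $\Rightarrow\det(A(B)[X])\neq 0$.'' One small simplification: your appeal to principal unimodularity is redundant at this stage. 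Once each term is a nonnegative integer and each of the $\kappa(B)$ quasi-trees contributes at least $1$, the fact that the total is exactly $\kappa(B)$ already forces every quasi-tree term to equal $1$ and every other term to equal $0$; you do not need to know in advance that nonsingular principal minors are $\pm1$. This matters because principal unimodularity of $A(B)$ is essentially a consequence of the proposition (cf.\ Corollary~\ref{principleuni}), so it is cleaner not to lean on it as an input.

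The genuine gap is the one you flag yourself, and it is not cosmetic: your induction in the case $X=E(B)$ rests on the identity $A(B^{\delta(\{e,f\})})=A(B)\ast\{e,f\}$ (principal pivot transform on the block $\{e,f\}$), from which the Schur-complement factorization $\det(A(B))=\det(A(B)[\{e,f\}])\cdot\det(A(B^{\delta(\{e,f\})})[E\setminus\{e,f\}])$ follows. Nothing in this paper supplies that identity --- Proposition~\ref{tauandtwist} gives the corresponding statement only at the level of delta-matroids, not of matrices --- and establishing it with the paper's sign and ordering conventions for $A(\cdot)$ is precisely the hard kernel of the result (it is, in effect, Bouchet's representability theory for even delta-matroids). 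As written, then, the proof is a correct reduction of Proposition~\ref{orient} to that one unproved matrix identity, plus several correctly handled peripheral facts (the interlacing edge $f$ must exist, $B^{\delta(\{e,f\})}$ is an orientable bouquet because twisting by an even set preserves evenness of the delta-matroid, and $E\setminus\{e,f\}$ is one of its quasi-trees). To make it self-contained you would either have to prove the pivot identity directly from the chord-diagram description of partial duality, or cite it explicitly from Bouchet's work or from \cite{Merino2023} --- at which point your argument becomes an honest alternative derivation rather than an independent proof.
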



\begin{proposition}\label{nonorient-1}
Let $B$ be a bouquet with $n$ edges which contains exactly one non-orientable loop $e_{1}$, and let $X\subseteq E(B)$. Then
	\begin{eqnarray*}
		\det(A(B)[X])=\left\{\begin{array}{ll}
			1, & \mbox{if}~X~\mbox{is the edge set of a quasi-tree of}~B,\\
			0, & \mbox{otherwise}.
		\end{array}\right.
	\end{eqnarray*}
\end{proposition}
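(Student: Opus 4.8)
The plan is to reduce the statement to the orientable case via the partial Petrial $B^{\tau(e_1)}$, exactly as in the proof of Theorem~\ref{quasitreeTH}. Two facts drive the argument. First, on the matrix side: $e_1$ is the unique non-orientable loop, so its chord carries framing $1$ and every other chord carries framing $0$, while $B^{\tau(e_1)}$ is an orientable bouquet whose chord for $e_1$ carries framing $0$; since the off-diagonal entries of an intersection matrix depend only on the cyclic positions of the chord endpoints (which a half-twist on a loop does not move) and the diagonal entries record the framings, $A(B)$ is obtained from $A(B^{\tau(e_1)})$ by adding $1$ to the entry indexed by $(e_1,e_1)$. (This is precisely what the cofactor computation in the proof of Theorem~\ref{quasitreeTH} already exploits.) Second, on the delta-matroid side: by Proposition~\ref{tauandloopcom}, $D(B)=D(B^{\tau(e_1)})+e_1$, and because $D(B^{\tau(e_1)})=(E,\mathcal{F})$ is even, its feasible sets after loop complementation form the disjoint union $\mathcal{F}\cup\{F\cup\{e_1\}\mid F\in\mathcal{F}_{\overline{e_1}}\}$, as in the discussion preceding equation~(\ref{equ1}). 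From these I would extract the following dictionary for $X\subseteq E(B)$: if $e_1\notin X$, then $X$ is the edge set of a quasi-tree of $B$ iff $X\in\mathcal{F}$; if $e_1\in X$, then $X$ is the edge set of a quasi-tree of $B$ iff exactly one of $X$ and $X\setminus\{e_1\}$ lies in $\mathcal{F}$ --- here ``exactly one'' uses that two feasible sets of an even delta-matroid cannot have sizes of opposite parity, whereas $|X|$ and $|X\setminus\{e_1\}|$ do.

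Next I would split into the two cases. If $e_1\notin X$, then $A(B)[X]=A(B^{\tau(e_1)})[X]$, so Proposition~\ref{orient} applied to the orientable bouquet $B^{\tau(e_1)}$ gives $\det(A(B)[X])=1$ when $X\in\mathcal{F}$ and $0$ otherwise; by the dictionary this is the claim. If $e_1\in X$, then $A(B)[X]$ differs from $A(B^{\tau(e_1)})[X]$ only by a $+1$ in the diagonal entry indexed by $e_1$; expanding the determinant by multilinearity in the row indexed by $e_1$, and then doing a cofactor expansion of the new summand along that row, yields
\[
\det(A(B)[X])=\det\big(A(B^{\tau(e_1)})[X]\big)+\det\big(A(B^{\tau(e_1)})[X\setminus\{e_1\}]\big).
\]
Proposition~\ref{orient} makes each summand equal to $1$ or $0$, and equal to $1$ precisely when $X$ (respectively $X\setminus\{e_1\}$) lies in $\mathcal{F}$. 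By the dictionary these two events are mutually exclusive and their disjunction is precisely ``$X$ is the edge set of a quasi-tree of $B$'', so the sum is $1$ in that case and $0$ otherwise, which completes the proof.

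I do not foresee a real obstruction: the whole content is the pairing of the rank-one diagonal perturbation distinguishing $A(B)$ from $A(B^{\tau(e_1)})$ with the loop complementation $D(B)=D(B^{\tau(e_1)})+e_1$, together with the observation that evenness of $D(B^{\tau(e_1)})$ is exactly what forbids both determinant contributions from being $1$ at once (so the spurious value $2$ never occurs). The one point worth stating carefully is that a half-twist on the loop $e_1$ changes only the $(e_1,e_1)$-entry of the intersection matrix, and for that I would simply refer to the argument already present in the proof of Theorem~\ref{quasitreeTH}.
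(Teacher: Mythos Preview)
Your proposal is correct and follows essentially the same approach as the paper: both reduce to the orientable case via $B^{\tau(e_1)}$, use the identity $\det(A(B)[X])=\det(A(B^{\tau(e_1)})[X])+\det(A(B^{\tau(e_1)})[X\setminus\{e_1\}])$ when $e_1\in X$, and invoke evenness of $D(B^{\tau(e_1)})$ together with Proposition~\ref{tauandloopcom} to rule out both summands equalling~$1$. The only cosmetic difference is that the paper splits into subcases on the parity of $|X|$, whereas you package this as the ``exactly one of $X$, $X\setminus\{e_1\}$ lies in $\mathcal{F}$'' dictionary; these are the same argument.
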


\begin{proof}
Clearly the result is true when $e_{1}\notin X$ by Proposition \ref{orient}. If $e_{1}\in X$, we have
$$\det(A(B)[X])= \det(A(B^{\tau(e_{1})})[X\backslash e_{1}])+\det(A(B^{\tau(e_{1})})[X]).$$
Recall that $D(B^{\tau(e_{1})})=(E,\mathcal{F})$ is an even delta-matroid and the cardinality of each feasible set is even. Since $\mathcal{F}=\mathcal{F}_{e_{1}}\cup \mathcal{F}_{\overline{e_{1}}}$, it follows that $$\mathcal{F}(D(B))=\mathcal{F}_{e_{1}}\cup \mathcal{F}_{\overline{e_{1}}}\cup \{F \cup e_{1}~|~F \in \mathcal{F}_{\overline{e_{1}}}\}.$$
Note that for any $F_1\in \mathcal{F}_{e_{1}}\cup \mathcal{F}_{\overline{e_{1}}}$ and $F_2\in \{F \cup e_{1} | F \in \mathcal{F}_{\overline{e_{1}}}\} $, $|F_1|$ and $|F_2|$ are even and odd, respectively. Moreover,  $$\mathcal{F}_{e_{1}}\cap \mathcal{F}_{\overline{e_{1}}}\cap \{F \cup e_{1} | F \in \mathcal{F}_{\overline{e_{1}}}\}=\emptyset.$$
We now distinguish two cases:

\begin{description}
  \item[Case 1.] If $X$ is even, then $X\backslash e_{1}\notin \mathcal{F}$. Thus $\det(A(B^{\tau(e_{1})})[X\backslash e_{1}])=0$ by Proposition \ref{orient}.
\begin{description}
  \item[Case 1.1] If $X$ is the edge set of a quasi-tree of $B$, that is, $X\in\mathcal{F}(D(B))$, then  $X\in \mathcal{F}_{e_{1}}$. Thus $X$ is a quasi-tree of $B^{\tau(e_{1})}$. Therefore $\det(A(B^{\tau(e_{1})})[X])=1$ by Proposition \ref{orient}.
	Hence $$\det(A(B)[X])= \det(A(B^{\tau(e_{1})})[X\backslash e_{1}])+\det(A(B^{\tau(e_{1})})[X])=1.$$
  \item[Case 1.2] If $X$ is not the edge set of a quasi-tree of $B$, that is, $X\notin\mathcal{F}(D(B))$, then $X\notin \mathcal{F}_{e_{1}}$. Therefore $X$ is not a quasi-tree of $B^{\tau(e_{1})}$. Thus $\det(A(B^{\tau(e_{1})})[X])=0$ by Proposition \ref{orient}.
	Hence $$\det(A(B)[X])= \det(A(B^{\tau(e_{1})})[X\backslash e_{1}])+\det(A(B^{\tau(e_{1})})[X])=0.$$
\end{description}
  \item[Case 2.] If $X$ is odd, then $X\notin \mathcal{F}$. We have $\det(A(B^{\tau(e_{1})})[X])=0$ by Proposition \ref{orient}.
\begin{description}
  \item[Case 2.1] If $X$ is the edge set of a quasi-tree of $B$, then $X\in \{F\cup e_{1}| F\in \mathcal{F}_{\overline{e_{1}}}\}$. Thus $X\backslash e_{1}\in \mathcal{F}_{\overline{e_{1}}}$, that is, $X\backslash e_{1}$ is a quasi-tree of $B^{\tau(e_{1})}$. Therefore $\det(A(B^{\tau(e_{1})})[X\backslash e_{1}])=1$ by Proposition \ref{orient}. Hence $$\det(A(B)[X])= \det(A(B^{\tau(e_{1})})[X\backslash e_{1}])+\det(A(B^{\tau(e_{1})})[X])=1.$$
  \item[Case 2.2] If $X$ is not the edge set of a quasi-tree of $B$, then $X\notin \{F\cup e_{1}| F\in \mathcal{F}_{\overline{e_{1}}}\}$. We have $X\backslash e_{1}\notin \mathcal{F}_{\overline{e_{1}}}$, that is, $X\backslash e_{1}$ is not a quasi-tree of $B^{\tau(e_{1})}$. Hence $\det(A(B^{\tau(e_{1})})[X\backslash e_{1}])=0$ by Proposition \ref{orient}. Therefore $$\det(A(B)[X])= \det(A(B^{\tau(e_{1})})[X\backslash e_{1}])+\det(A(B^{\tau(e_{1})})[X])=0.$$
\end{description}
\end{description}
\end{proof}

The following corollary is an immediate consequence of Proposition \ref{nonorient-1}.

\begin{corollary}\label{principleuni}
	Let $B$ be a bouquet with $n$ edges which contains exactly one non-orientable loop $e_{1}$. Then $A(B)$ is a principal unimodular matrix.
\end{corollary}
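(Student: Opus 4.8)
The statement to prove is Corollary~\ref{principleuni}: that $A(B)$ is a principal unimodular matrix when $B$ is a bouquet with exactly one non-orientable loop.

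\bigskip

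The plan is to derive this directly from Proposition~\ref{nonorient-1}, which I will assume. Recall that a principal unimodular matrix is one in which every \emph{nonsingular} principal submatrix has determinant $\pm 1$. So the task reduces to understanding the determinants of the principal submatrices $A(B)[X]$ for all $X\subseteq E(B)$. First I would observe that $A(B)[X]$ is itself the intersection matrix of a subobject: namely, deleting the edges in $E(B)\setminus X$ from $B$ yields a bouquet $B\backslash (E(B)\setminus X)$ whose intersection matrix is precisely $A(B)[X]$ (intersection matrices restrict to principal submatrices under edge deletion, as is already implicit in the proof of Theorem~\ref{quasitreeTH} where $M[1,1] = I_{n-1}+A(B\backslash e_1)$). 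This sub-bouquet contains at most one non-orientable loop — either zero (if $e_1\notin X$) or exactly one (if $e_1\in X$) — so Proposition~\ref{nonorient-1} applies to it directly, or one uses Proposition~\ref{orient} in the orientable case. In either case the conclusion is that $\det(A(B)[X])\in\{0,1\}$ for every $X\subseteq E(B)$.

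\bigskip

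Given this, the argument finishes in one line: if $A(B)[X]$ is a nonsingular principal submatrix, then $\det(A(B)[X])\neq 0$, and since by the above $\det(A(B)[X])\in\{0,1\}$, we must have $\det(A(B)[X])=1$, which is a unit. Hence every nonsingular principal submatrix of $A(B)$ is unimodular, so $A(B)$ is a principal unimodular matrix by definition.

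\bigskip

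I do not anticipate a genuine obstacle here, since Proposition~\ref{nonorient-1} has already done the substantive work of pinning the principal-minor values to $\{0,1\}$. The only point requiring a word of care is the reduction identifying $A(B)[X]$ with the intersection matrix of an edge-deleted bouquet; this is routine but should be stated so that Proposition~\ref{nonorient-1} (or Proposition~\ref{orient}) can legitimately be invoked on $X$ as an \emph{ambient} ground set rather than a proper subset. Since Proposition~\ref{nonorient-1} is already phrased in terms of $\det(A(B)[X])$ for all $X\subseteq E(B)$, one can in fact skip even this remark and apply it verbatim: for any $X$, $\det(A(B)[X])\in\{0,1\}$, and nonsingularity forces the value $1$.
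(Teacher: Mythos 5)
Your proposal is correct and matches the paper, which simply states that the corollary is an immediate consequence of Proposition~\ref{nonorient-1}: that proposition already gives $\det(A(B)[X])\in\{0,1\}$ for every $X\subseteq E(B)$, so any nonsingular principal submatrix has determinant $1$. Your extra remark about identifying $A(B)[X]$ with the intersection matrix of an edge-deleted sub-bouquet is fine but, as you note yourself, unnecessary since the proposition is already stated for arbitrary $X$.
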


\begin{remark}
It is well-known that
\[\det(I+M)=\sum_{X\subseteq E}\det(M[X]),\]
where $M$ is a square matrix, the rows and columns of $M$ are indexed by $E$, and $\det(M[\emptyset])=1$. We can obtain the Matrix Quasi-tree Theorem $($Theorem \ref{quasitreeTH}$)$ also by Proposition \ref{nonorient-1}.
\end{remark}

\section{The Fibonacci numbers, Lucas numbers and some classes of bouquets}

\subsection{The number of quasi-trees of $\mathbb{F}'_n, \mathbb{F}'^{1}_{n}, \mathbb{F}^{1}_{n}, \mathbb{F}'^{n}_{n}$ and $\mathbb{W}^{i}_{n}$ }
For each $n\geq2$, let $\mathbb{F}'_n$ be the bouquet with the signed rotation
$$(1,2,3,2,1,4,3,5,4,\cdots,i, i-1,i+1,i,\cdots,n-1,n-2,n,n-1,n),$$ whose chord diagram $D(\mathbb{F}'_n)$ consists of the pairs $$\{(1,5),(2,4),(3,7),(6,9),\cdots,(2n-4,2n-1),(2n-2,2n)\}.$$
Figure \ref{fig:newfn} shows the bouquet $\mathbb{F}'_6$. The corresponding intersection graph of $\mathbb{F}'_n$ is the caterpillar $T_n$. Figure \ref{fig:caterpillar} shows  $T_7$.

\begin{figure}
	\centering
	\includegraphics[width=0.5\linewidth]{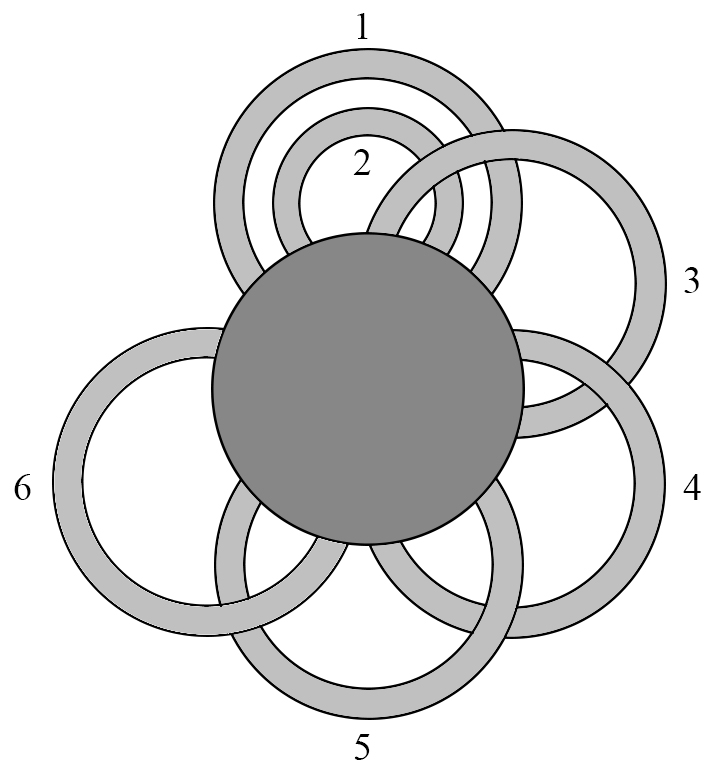}
	\caption{$\mathbb{F}'_6$}
	\label{fig:newfn}
\end{figure}

\begin{theorem}\label{main1}
	The number of quasi-trees of $\mathbb{F}'_{n}~(n\geq 2)$ equals the $(n-1)$-th Lucas number $\ell_{n-1}$.
\end{theorem}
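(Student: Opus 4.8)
The plan is to apply the Matrix-Quasi-tree Theorem (Theorem \ref{quasitreeTH}) directly, since $\mathbb{F}'_n$ is a bouquet with exactly one non-orientable loop, namely the edge $1$ whose two half-edges in the signed rotation $(1,2,3,2,1,\dots)$ receive opposite signs (equivalently, the chord $\{1,5\}$ in $D(\mathbb{F}'_n)$ is the unique framed-$1$ chord, being the chord corresponding to the loop with a half-twist). So $\kappa(\mathbb{F}'_n)=\det(I_n+A(\mathbb{F}'_n))$, and the task reduces to evaluating this determinant and showing it equals $\ell_{n-1}$.

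First I would write down the intersection matrix $A=A(\mathbb{F}'_n)$ explicitly from the chord pairs $\{(1,5),(2,4),(3,7),(6,9),(8,11),\dots,(2n-4,2n-1),(2n-2,2n)\}$, choosing a convenient orientation of each ordered pair. The diagonal is $(1,0,0,\dots,0)$ because only chord $1$ has framing $1$. The off-diagonal entries record which chords interlace, with signs from the cyclic order; from the description, the intersection graph is the caterpillar $T_n$, so chord $2$ meets only chord $1$, and for $i\ge 2$ chord $i$ meets chord $i+1$ (and chord $i+1$ meets chord $i$), giving a near-tridiagonal skew pattern once one sorts out the signs. After fixing the signs, $M:=I_n+A(\mathbb{F}'_n)$ becomes an explicit matrix that is tridiagonal except for one extra entry coming from the length-2 ``leg'' chord $2$; the natural move is to expand $\det M$ along the first row or column, or along the row/column of the leg chord, to peel off the non-orientable loop and the leg and reduce to a determinant of the same shape as the one computed for $\mathbb{F}_{n-1}$ in Theorem \ref{main1}'s Fibonacci analogue (Theorem \ref{quasitreeTH}/Merino's $\mathbb{F}_m$ case, where $\kappa(\mathbb{F}_m)=f_{m+1}$).

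Concretely, I expect the cofactor expansion to yield a two-term recursion $D_n = D_{n-1} + D_{n-2}$ for $D_n:=\det(I_n+A(\mathbb{F}'_n))$, coming either from the tridiagonal structure of the ``spine'' of the caterpillar or, more conceptually, from combining the deletion–contraction / handle-sliding identity already illustrated in the Example (where $\kappa(\widetilde{(\mathbb{F}_n)}_{1,2})=f_{n+1}+f_{n-2}=\ell_{n-1}$, and $\widetilde{(\mathbb{F}_n)}_{1,2}$ should be recognizable as $\mathbb{F}'_n$ up to the bookkeeping in the chord diagram). Then I would check the base cases: for $n=2$, $\mathbb{F}'_2$ is a single non-orientable loop together with one ordinary loop configuration, and one verifies $\kappa(\mathbb{F}'_2)=\ell_1=1$; for $n=3$, $\kappa(\mathbb{F}'_3)=\ell_2=3$. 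Since $\ell_{n-1}$ satisfies the same recursion with these initial values, induction finishes the proof.

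The main obstacle I anticipate is purely bookkeeping: correctly extracting the signs of the off-diagonal entries of $A(\mathbb{F}'_n)$ from the cyclic orders of the interlacing quadruples (the Remark after Theorem \ref{quasitreeTH} guarantees the determinant is independent of the orientation choices, which is reassuring but does not remove the need to get a consistent sign pattern), and making sure the cofactor expansion around the loop $e_1$ and the leg chord $2$ is set up so that the leftover minors are genuinely of the form $I_m + A(\mathbb{F}'_m)$ or $I_m + A(\mathbb{F}_m)$ rather than some perturbed variant. Once the recursion $D_n=D_{n-1}+D_{n-2}$ and the two base values are pinned down, the rest is immediate; alternatively, one can bypass the sign analysis entirely by invoking the handle-sliding computation in the Example together with the identification $(\mathbb{F}_n)'_{1,2}=\mathbb{F}_{n-1}\vee\mathbb{F}_1$ and $\widetilde{(\mathbb{F}_n)'}_{1,2}=\mathbb{F}'_n$, which is likely the route the authors take and which I would present as the clean alternative proof.
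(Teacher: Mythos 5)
Your proof starts from a false premise: $\mathbb{F}'_n$ is an \emph{orientable} bouquet, not a bouquet with one non-orientable loop. In the signed rotation $(1,2,3,2,1,4,3,\dots)$ every half-edge carries the (omitted) sign $+$; the fact that the labels $1$ and $2$ appear in the nested pattern $1,2,\dots,2,1$ only means that chords $(1,5)$ and $(2,4)$ do not interlace — it says nothing about a half-twist. Non-orientability would be signalled by a $-$ sign on one half-edge, as in the paper's $\mathbb{F}'^{1}_{n}$ with rotation $(-1,2,3,2,1,\dots)$, which is precisely the ``twisted first edge'' variant. Consequently every chord of $D(\mathbb{F}'_n)$ has framing $0$, the correct diagonal of $A(\mathbb{F}'_n)$ is all zeros, and the relevant tool is the orientable Matrix-Quasi-tree Theorem (Theorem \ref{key1}), which is what the paper uses. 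This is not merely a citation error: with your framing the $(1,1)$ entry of $I_n+A$ becomes $2$ instead of $1$, and the column expansion then yields $2f_n+f_{n-2}=f_n+\ell_{n-1}$ — the count for $\mathbb{F}'^{1}_{n}$ — rather than $f_n+f_{n-2}=\ell_{n-1}$. So carried out as written, your computation produces the wrong number.

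The rest of the outline is close to the paper's argument (expand $\det(I_n+A(\mathbb{F}'_n))$ along the column of the first chord; the two surviving cofactors are $\det(I_{n-1}+A(\mathbb{F}_{n-1}))=f_n$ and a minor evaluating to $f_{n-2}$, giving $\ell_{n-1}$ directly rather than via a recursion $D_n=D_{n-1}+D_{n-2}$, though a tail expansion giving that recursion with $D_2=1$, $D_3=3$ would also work). Your proposed fallback, however, is circular: the Example computes $\kappa(\widetilde{(\mathbb{F}_n)}_{1,2})$ by \emph{assuming} $\kappa(\widetilde{(\mathbb{F}_n)'}_{1,2})=\kappa(\mathbb{F}'_n)=\ell_{n-1}$ from Table \ref{tab:my_label}, so the four-term relation cannot be used to establish Theorem \ref{main1} without an independent evaluation of $\kappa(\widetilde{(\mathbb{F}_n)}_{1,2})$, which you do not supply. (Also note $\widetilde{(\mathbb{F}_n)}_{1,2}$ is not $\mathbb{F}'_n$; the paper's identification is $\widetilde{(\mathbb{F}_n)'}_{1,2}=\mathbb{F}'_n$.) To repair the proof, simply drop the non-orientability claim, take $A_{1,1}=0$, and run the cofactor expansion with Theorem \ref{key1}.
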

\begin{proof}
	If $n=2$, then $\kappa(\mathbb{F}'_{2})=1=\ell_{1}$. If $n\geq 3$, then

\[I_{n}+A(\mathbb{F}'_n)=\left(
\begin{array}{ccccccc}
	1 & 0 & 1 &\cdots& 0 & 0 & 0 \\
	0 & 1 & 1 & 0 & \cdots & 0 & 0 \\
	-1&-1 & 1 & 1 & \cdots & 0 & 0 \\
	\vdots&\vdots&\vdots&\vdots&\vdots&\vdots& \vdots\\
	0 & 0 & 0 & \cdots & 1 & 1 & 0 \\
	0 & 0 & 0 & \cdots &-1 & 1 & 1 \\
	0 & 0 & 0 & \cdots & 0 &-1 & 1 \\
\end{array}
\right).\]
The number of quasi-trees of $\mathbb{F}'_n$ is given by the determinant of the matrix $I_{n}+A(\mathbb{F}'_n)$ by Theorem \ref{key1}. Let $M:=I_{n}+A(\mathbb{F}'_n)$. Then
\begin{eqnarray*}
	\kappa(\mathbb{F}'_n)&=&\det(I_n+A(\mathbb{F}'_n))\\
	&=&  \det(M[1, 1])+\sum_{i=2}^{n}(-1)^{1+i}m_{i,1}\cdot \det(M[i,1])\\
	&=& \kappa(\mathbb{F}_{n-1})+(-1)^{3}m_{1,3}m_{2,2}m_{3,1}\kappa(\mathbb{F}_{n-3})\\
	&=&\kappa(\mathbb{F}_{n-1})+\kappa(\mathbb{F}_{n-3})\\
	&=&f_{n}+f_{n-2}\\
	&=&\ell_{n-1}.
\end{eqnarray*}
\end{proof}

Let $\mathbb{F}'^{1}_{n}$ denote the non-orientable bouquet obtained by twisting the first edge in $\mathbb{F}'_{n}$.
Then the signed rotation of  $\mathbb{F}'^{1}_{n}$ is
 $$(-1,2,3,2,1,4,3,5,4,\cdots,i, i-1,i+1,i,\cdots,n-1,n-2,n,n-1,n),$$ whose framed chord diagram
  $D(\mathbb{F}'^{1}_{n})$ consists of the pairs $$\{(1,5),(2,4),(3,7),(6,9),\cdots,(2n-4,2n-1),(2n-2,2n)\},$$ where chord $(1,5)$ is endowed with 1, others are endowed with 0.

\begin{theorem}
The number of quasi-trees of~~$\mathbb{F}'^{1}_{n}~(n\geq 2)$ is $f_{n}+\ell_{n-1}$.
\end{theorem}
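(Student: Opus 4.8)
The plan is to apply the Matrix-Quasi-tree Theorem in the form of Theorem \ref{quasitreeTH}, which is available precisely because $\mathbb{F}'^{1}_{n}$ is a bouquet with exactly one non-orientable loop, namely the edge labelled $1$ (carrying framing $1$). So $\kappa(\mathbb{F}'^{1}_{n})=\det(I_{n}+A(\mathbb{F}'^{1}_{n}))$. The intersection matrix $A(\mathbb{F}'^{1}_{n})$ differs from $A(\mathbb{F}'_{n})$ only in the $(1,1)$-entry, which changes from $0$ to $1$ because chord $1$ now has framing $1$; hence $I_{n}+A(\mathbb{F}'^{1}_{n})$ is obtained from the matrix $M=I_{n}+A(\mathbb{F}'_{n})$ written out in the proof of Theorem \ref{main1} by changing its $(1,1)$-entry from $1$ to $2$.

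First I would handle the base case $n=2$ directly, checking $\kappa(\mathbb{F}'^{1}_{2})=f_{2}+\ell_{1}=1+1=2$ by inspection (a two-edge bouquet with rotation $(-1,2,2,1)$). For $n\ge 3$, I would expand $\det(I_{n}+A(\mathbb{F}'^{1}_{n}))$ along the first column exactly as in the proof of Theorem \ref{main1}. The only difference is that the first diagonal entry is $2$ instead of $1$, so the contribution of the first term is $2\cdot\det(M[1,1])$ rather than $\det(M[1,1])$; all the other cofactor terms down the first column are identical to those appearing in the $\mathbb{F}'_{n}$ computation. Since $\det(M[1,1])=\kappa(\mathbb{F}_{n-1})=f_{n}$ (the submatrix $M[1,1]$ is $I_{n-1}+A(\mathbb{F}_{n-1})$, by the structure of the chord diagram), this yields
\[
\kappa(\mathbb{F}'^{1}_{n})=\det(I_{n}+A(\mathbb{F}'^{1}_{n}))=\kappa(\mathbb{F}'_{n})+\det(M[1,1])=\ell_{n-1}+f_{n},
\]
using Theorem \ref{main1} for the value $\kappa(\mathbb{F}'_{n})=\ell_{n-1}$.

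Alternatively, and perhaps cleaner to present, one can invoke Proposition \ref{tauandloopcom} / equation (\ref{equ1}) at the delta-matroid level: writing $D(\mathbb{F}'_{n})=(E,\mathcal{F})$, we have $D(\mathbb{F}'^{1}_{n})=D(\mathbb{F}'_{n})+e_{1}$, so $\kappa(\mathbb{F}'^{1}_{n})=|\mathcal{F}|+|\mathcal{F}_{\overline{e_{1}}}|=\kappa(\mathbb{F}'_{n})+\kappa(\mathbb{F}'_{n}\backslash e_{1})$, and $\mathbb{F}'_{n}\backslash e_{1}=\mathbb{F}_{n-1}$ by reading off the signed rotation, giving $\ell_{n-1}+f_{n}$. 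I expect the only real point needing care is the bookkeeping identifying the relevant minors/deletions with $\mathbb{F}_{n-1}$ and confirming that none of the other first-column cofactors pick up a sign or value change relative to the $\mathbb{F}'_{n}$ case — i.e. that twisting edge $1$ genuinely affects only the $(1,1)$-entry of the intersection matrix. This is immediate from the definition of $A(D)$ for framed chord diagrams, but it is the step where an error would creep in.
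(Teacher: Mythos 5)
Your main argument is correct and is essentially the paper's proof: apply Theorem \ref{quasitreeTH}, note that twisting $e_1$ changes only the $(1,1)$-entry of $I_n+A(\mathbb{F}'_n)$ from $1$ to $2$, and expand along the first column (the paper recomputes the cofactors as $2\kappa(\mathbb{F}_{n-1})+\kappa(\mathbb{F}_{n-3})=2f_n+f_{n-2}$, whereas you equivalently reuse $\kappa(\mathbb{F}'_n)=\ell_{n-1}$ plus the extra $\det(M[1,1])=f_n$). Your alternative via loop complementation, $\kappa(\mathbb{F}'^{1}_{n})=\kappa(\mathbb{F}'_{n})+\kappa(\mathbb{F}'_{n}\backslash e_1)$, is also valid and is just the identity $\det(I_n+A(B))=|\mathcal{F}|+|\mathcal{F}_{\overline{e_1}}|$ already established inside the proof of Theorem \ref{quasitreeTH}.
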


\begin{proof}
	
If $n=2$, then $\kappa(\mathbb{F}'^{1}_{2})=2=f_{2}+\ell_{1}$. For $n\geq 3$,
\[I_{n}+A(\mathbb{F}'^{1}_{n})=\left(
\begin{array}{ccccccc}
	2 & 0 & 1 &\cdots& 0 & 0 & 0 \\
	0 & 1 & 1 & 0 & \cdots & 0 & 0 \\
	-1&-1 & 1 & 1 & \cdots & 0 & 0 \\
\vdots&\vdots&\vdots&\vdots&\vdots&\vdots& \vdots\\
	0 & 0 & 0 & \cdots & 1 & 1 & 0 \\
	0 & 0 & 0 & \cdots &-1 & 1 & 1 \\
	0 & 0 & 0 & \cdots & 0 &-1 & 1 \\
\end{array}
\right).\]
By Theorem \ref{quasitreeTH}, the number of quasi-trees of $\mathbb{F}'^{1}_{n}$ is given by the determinant of the matrix $I_{n}+A(\mathbb{F}'^{1}_{n})$. Let $M:=I_{n}+A(\mathbb{F}'^{1}_{n})$.
First, we expand the determinant of $M$ along the first column. The submatrix $M[1, 1]$ is the matrix $I_{n-1}+A(\mathbb{F}'^{1}_{n}\backslash e_{1})$ whose determinant equals the number of quasi-trees of $\mathbb{F}'^{1}_{n}\backslash e_{1}$ by Theorem \ref{key1}. Note that $\mathbb{F}'^{1}_{n}\backslash e_{1}=\mathbb{F}_{n-1}$. Hence,
\begin{eqnarray*}
	\det(I_{n}+A(\mathbb{F}'^{1}_{n}))&=& 2\cdot \det(M[1, 1])+\sum_{i=2}^{n}(-1)^{1+i}m_{i,1}\cdot \det(M[i,1])\\
	&=&  2 \kappa(\mathbb{F}_{n-1})+(-1)^{3}m_{1,3}m_{2,2}m_{3,1}\kappa(\mathbb{F}_{n-3})\\
		&=&2 \kappa(\mathbb{F}_{n-1})+\kappa(\mathbb{F}_{n-3})\\
		&=&2f_{n}+f_{n-2}\\
		&=&f_{n}+\ell_{n-1}.
\end{eqnarray*}
\end{proof}

Let $\mathbb{F}^{1}_{n}$ denote the non-orientable bouquet obtained by twisting the first edge in $\mathbb{F}_n$.
Then the signed rotation of $\mathbb{F}^{1}_{n}$ is $$(-1,2,1,3,2,4,3,\cdots,i, i-1,i+1,i,\cdots,n-1,n-2,n,n-1,n),$$ whose framed chord diagram $D(\mathbb{F}^{1}_{n})$ consists of the pairs $$\{(1,3),(2,5),(4,7),\cdots,(2n-4,2n-1),(2n-2,2n)\},$$ where chord $(1,3)$  is endowed with 1, others are endowed with 0.

\begin{theorem}
The number of quasi-trees of $\mathbb{F}^{1}_{n}~(n\geq 1)$ is $(n+2)$-th Fibonacci number $f_{n+2}$.
\end{theorem}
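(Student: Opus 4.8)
The plan is to apply the Matrix-Quasi-tree Theorem for bouquets with exactly one non-orientable loop (Theorem~\ref{quasitreeTH}), exactly as in the proofs of the two preceding theorems, and then to reduce the resulting determinant to the value $\kappa(\mathbb{F}_m)=f_{m+1}$ established by Merino~\cite{Merino}. Since $\mathbb{F}^{1}_{n}$ is obtained from $\mathbb{F}_{n}$ by adding a single half-twist to the first edge, it is a bouquet with exactly one non-orientable loop $e_{1}$, so Theorem~\ref{quasitreeTH} gives $\kappa(\mathbb{F}^{1}_{n})=\det(I_{n}+A(\mathbb{F}^{1}_{n}))$. The framed chord diagram of $\mathbb{F}^{1}_{n}$ coincides with that of $\mathbb{F}_{n}$ except that the chord $(1,3)$ now carries framing $1$; hence, keeping the same choice of orientations of the chords, the intersection matrices satisfy $A(\mathbb{F}^{1}_{n})=A(\mathbb{F}_{n})+E_{1,1}$, where $E_{1,1}$ has a single $1$ in position $(1,1)$. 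Concretely, $M:=I_{n}+A(\mathbb{F}^{1}_{n})$ is the tridiagonal matrix with diagonal $(2,1,\dots,1)$, superdiagonal entries all $1$ and subdiagonal entries all $-1$, because the intersection graph of $\mathbb{F}_{n}$ is the path $P_{n}$.

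The key step is then a one-line determinant manipulation, identical in spirit to the proof of Theorem~\ref{quasitreeTH}. I would expand $\det M$ along the first column and rewrite the leading coefficient $m_{1,1}=2$ as $1+1$, obtaining
\[
\det(I_{n}+A(\mathbb{F}^{1}_{n}))=\det(M[1,1])+\Big(\det(M[1,1])+\sum_{i=2}^{n}(-1)^{1+i}m_{i,1}\det(M[i,1])\Big)=\det(M[1,1])+\det(I_{n}+A(\mathbb{F}_{n})),
\]
since the parenthesized sum is precisely the cofactor expansion of $\det(I_{n}+A(\mathbb{F}_{n}))$ along its first column (the two matrices differ only in the $(1,1)$-entry, which equals $1$ for $\mathbb{F}_{n}$). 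For the remaining term I would use that deleting the chord $(1,3)$ from the chord diagram of $\mathbb{F}_{n}$ leaves the chord diagram of $\mathbb{F}_{n-1}$ (equivalently, $\mathbb{F}^{1}_{n}\setminus e_{1}=\mathbb{F}_{n}\setminus e_{1}=\mathbb{F}_{n-1}$), so $M[1,1]=I_{n-1}+A(\mathbb{F}^{1}_{n}\setminus e_{1})=I_{n-1}+A(\mathbb{F}_{n-1})$, whose determinant is $\kappa(\mathbb{F}_{n-1})$ by Theorem~\ref{key1}. Combining this with Merino's values $\kappa(\mathbb{F}_{m})=f_{m+1}$ \cite{Merino} gives
\[
\kappa(\mathbb{F}^{1}_{n})=\kappa(\mathbb{F}_{n-1})+\kappa(\mathbb{F}_{n})=f_{n}+f_{n+1}=f_{n+2}.
\]

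I do not expect a genuine obstacle: the argument is a near-verbatim adaptation of the proof for $\mathbb{F}'^{1}_{n}$, and the only points needing a moment of care are (i) justifying that $\mathbb{F}^{1}_{n}$ and $\mathbb{F}_{n}$ share a chord diagram, so that $A(\mathbb{F}^{1}_{n})-A(\mathbb{F}_{n})=E_{1,1}$, and (ii) identifying $\mathbb{F}^{1}_{n}\setminus e_{1}$ with $\mathbb{F}_{n-1}$ (by deleting the label $1$ from the signed rotation of $\mathbb{F}_{n}$ and relabelling $j\mapsto j-1$). To be safe I would also record the degenerate case $n=1$ directly: $\mathbb{F}^{1}_{1}$ is a single non-orientable loop, whose spanning ribbon subgraphs are the Möbius band itself (one boundary component) and the edgeless vertex, so $\kappa(\mathbb{F}^{1}_{1})=2=f_{3}$, in agreement with the formula (and the displayed computation also goes through for $n=1$ with $\mathbb{F}_{0}$ the single vertex).
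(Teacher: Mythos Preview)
Your proposal is correct and follows essentially the same approach as the paper: apply Theorem~\ref{quasitreeTH} to the tridiagonal matrix $M=I_n+A(\mathbb{F}^1_n)$, expand along the first column, and identify $M[1,1]=I_{n-1}+A(\mathbb{F}_{n-1})$. The only cosmetic difference is that the paper keeps the leading coefficient as $2$ and evaluates the remaining cofactor term directly as $\kappa(\mathbb{F}_{n-2})$, obtaining $2f_n+f_{n-1}=f_{n+2}$, whereas you split $2=1+1$ to recognise $\det(I_n+A(\mathbb{F}_n))=\kappa(\mathbb{F}_n)$ and obtain $f_n+f_{n+1}=f_{n+2}$; both regroupings are immediate.
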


\begin{proof}
If $n=1$, then $\kappa(\mathbb{F}^{1}_{1})=2=f_{3}$.  For $n\geq 2$,
\[I_{n}+A(\mathbb{F}^{1}_{n})=\left(
\begin{array}{ccccccc}
	2 & 1 & 0 & \cdots & 0 &  0 & 0 \\
	-1 & 1 & 1 & 0 & \cdots & 0 & 0 \\
	0 & -1 & 1 & 1 & \cdots & 0 & 0 \\
	\vdots & \vdots & \vdots & \vdots & \vdots & \vdots& \vdots  \\
	0 & 0 & 0 & \cdots & 1 & 1 & 0 \\
	0 & 0 & 0 & \cdots & -1 & 1 & 1 \\
	0 & 0& 0 & \cdots & 0 & -1  & 1 \\
\end{array}
\right).\]
By Theorem \ref{quasitreeTH},
the number of quasi-trees of $\mathbb{F}^{1}_{n}$ is given by the determinant of the matrix $I_{n}+A(\mathbb{F}^{1}_{n})$. Let $M:=I_{n}+A(\mathbb{F}^{1}_{n})$.
First, we expand the determinant of $M$ along the first column. The submatrix $M[1, 1]$ is the matrix $I_{n-1}+A(\mathbb{F}^{1}_{n}\backslash e_{1})$ whose determinant equals the number of quasi-trees of $\mathbb{F}^{1}_{n}\backslash e_{1}$ by Theorem \ref{key1}. Note that
$\mathbb{F}^{1}_{n}\backslash e_{1}=\mathbb{F}_{n-1}$. Therefore,
	\begin{eqnarray*}
		\det(I_{n}+A(\mathbb{F}^{1}_{n}))&=& 2\cdot \det(M[1, 1])+\sum_{i=2}^{n}(-1)^{1+i}m_{i,1}\cdot \det(M[i,1])\\
		&=&  2 \kappa(\mathbb{F}_{n-1})+(-1)^{1}m_{1,2}m_{2,1}\kappa(\mathbb{F}_{n-2})\\
		&=&2 \kappa(\mathbb{F}_{n-1})+\kappa(\mathbb{F}_{n-2})\\
		&=&2f_{n}+f_{n-1}\\
		&=&f_{n+2}.
	\end{eqnarray*}
\end{proof}

Let $\mathbb{F}'^{n}_{n}$ denote the non-orientable bouquet obtained by twisting the last edge in $\mathbb{F}'_{n}$. Then  the signed rotation of $\mathbb{F}'^{n}_{n}$ is $$(1,2,3,2,1,4,3,5,4,\cdots,i, i-1,i+1,i,\cdots,n-1,n-2,n,n-1,-n),$$ whose framed chord diagram $D(\mathbb{F}'^{n}_{n})$ consists of the pairs $$\{(1,5),(2,4),(3,7),(6,9),\cdots,(2n-4,2n-1),(2n-2,2n)\},$$ where chord $(2n-2,2n)$ is endowed with 1, others are endowed with 0.

\begin{theorem}
The number of quasi-trees of $\mathbb{F}'^{n}_{n}~(n\geq 3)$ is $n$-th Lucas number $\ell_{n}$.
\end{theorem}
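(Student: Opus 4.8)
The plan is to reduce the computation to Theorem~\ref{main1} by observing that twisting the last edge perturbs the intersection matrix in only one entry. First I would record that $\mathbb{F}'_{n}$ is orientable (its signed rotation carries no minus sign, i.e. every chord of $D(\mathbb{F}'_{n})$ has framing $0$), so that $\mathbb{F}'^{n}_{n}$ is a bouquet with exactly one non-orientable loop, namely $e_{n}$; Theorem~\ref{quasitreeTH} then gives $\kappa(\mathbb{F}'^{n}_{n})=\det(I_{n}+A(\mathbb{F}'^{n}_{n}))$. Using the same ordered pairs for the chords of $D(\mathbb{F}'^{n}_{n})$ as for those of $D(\mathbb{F}'_{n})$ — permissible since, by the Remark following Theorem~\ref{quasitreeTH}, the determinant is independent of this choice — the matrices $A(\mathbb{F}'^{n}_{n})$ and $A(\mathbb{F}'_{n})$ agree off the diagonal and differ only in the $(n,n)$-entry, which is $1$ for the framing-$1$ chord $(2n-2,2n)$ and $0$ for $\mathbb{F}'_{n}$. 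Hence $I_{n}+A(\mathbb{F}'^{n}_{n})$ is obtained from $M:=I_{n}+A(\mathbb{F}'_{n})$ by adding $1$ in the bottom-right corner.

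Second, I would expand by multilinearity in the last column: with $M'$ the matrix obtained from $M$ by replacing its last column by the $n$-th standard basis vector,
\[\det\bigl(I_{n}+A(\mathbb{F}'^{n}_{n})\bigr)=\det M+\det M'.\]
Here $\det M=\kappa(\mathbb{F}'_{n})=\ell_{n-1}$ by Theorems~\ref{key1} and~\ref{main1}, while expanding $\det M'$ along its last column (a single $1$ in position $(n,n)$) gives $\det M'=\det(M[n,n])$; since deleting the chord $(2n-2,2n)$ from $D(\mathbb{F}'_{n})$ produces exactly $D(\mathbb{F}'_{n-1})$, that is $\mathbb{F}'_{n}\backslash e_{n}=\mathbb{F}'_{n-1}$, we get $M[n,n]=I_{n-1}+A(\mathbb{F}'_{n-1})$ and so $\det M'=\kappa(\mathbb{F}'_{n-1})=\ell_{n-2}$. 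Therefore $\kappa(\mathbb{F}'^{n}_{n})=\ell_{n-1}+\ell_{n-2}=\ell_{n}$, the last step being the Lucas recurrence, valid for $n\geq 3$; a direct check at $n=3$ gives $\ell_{2}+\ell_{1}=4=\ell_{3}$.

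Alternatively, staying closer to the format of the preceding proofs, one can write out $I_{n}+A(\mathbb{F}'^{n}_{n})$ explicitly and expand along its last row $(0,\dots,0,-1,2)$ to get $\det(M[n,n-1])+2\det(M[n,n])$, then expand $\det(M[n,n-1])$ once more to identify it with $\det(I_{n-2}+A(\mathbb{F}'_{n-2}))=\ell_{n-3}$, obtaining $\ell_{n-3}+2\ell_{n-2}=\ell_{n-2}+\ell_{n-1}=\ell_{n}$ (this variant needs $n\geq 4$, with $n=3$ treated separately).

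I do not expect a serious obstacle. The only point needing care is the bookkeeping that identifies the minors of $I_{n}+A(\mathbb{F}'_{n})$ obtained by deleting the last one or two rows and columns with $I_{n-1}+A(\mathbb{F}'_{n-1})$ and $I_{n-2}+A(\mathbb{F}'_{n-2})$ — equivalently, that deleting the last chord(s) of $D(\mathbb{F}'_{n})$, together with the chosen orientations, recovers $\mathbb{F}'_{n-1}$ and $\mathbb{F}'_{n-2}$. This is transparent from the explicit description of $D(\mathbb{F}'_{n})$ and the caterpillar structure of its intersection graph; once it is in place, the result follows from Theorems~\ref{key1}, \ref{quasitreeTH} and~\ref{main1} by a two-line determinant computation.
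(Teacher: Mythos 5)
Your proof is correct, and both it and the paper's argument compute $\det(I_n+A(\mathbb{F}'^{n}_{n}))$ via Theorem \ref{quasitreeTH}, but the expansions are genuinely different. The paper expands along the \emph{first} column, identifies $\mathbb{F}'^{n}_{n}\backslash e_1$ with $\mathbb{F}^{1}_{n-1}$ (implicitly using the reversal symmetry of the path to move the twist from the last edge of $\mathbb{F}_{n-1}$ to the first), and obtains $\kappa(\mathbb{F}'^{n}_{n})=\kappa(\mathbb{F}^{1}_{n-1})+\kappa(\mathbb{F}^{1}_{n-3})=f_{n+1}+f_{n-1}=\ell_n$, so it leans on the earlier theorem $\kappa(\mathbb{F}^{1}_{m})=f_{m+2}$. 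You instead isolate the twisted edge $e_n$ itself: multilinearity in the last column gives $\det(I_n+A(\mathbb{F}'^{n}_{n}))=\det(I_n+A(\mathbb{F}'_{n}))+\det(I_{n-1}+A(\mathbb{F}'_{n-1}))=\ell_{n-1}+\ell_{n-2}$, which needs only Theorem \ref{main1} and the Lucas recurrence and works uniformly for all $n\geq 3$ (your first variant; the row-expansion variant needs $n\geq 4$, as you note). Your decomposition is arguably more transparent: the identity $\det(I_n+A(B))=\det(I_n+A(B^{\tau(e)}))+\det(I_{n-1}+A(B\backslash e))$ is exactly the mechanism in the proof of Theorem \ref{quasitreeTH} (it is $|\mathcal{F}''|=|\mathcal{F}|+|\mathcal{F}_{\overline{e}}|$ in delta-matroid terms), so your computation exhibits the contribution of the single twist directly, whereas the paper's first-column expansion treats $\mathbb{F}'^{n}_{n}$ the same way it treats the orientable $\mathbb{F}'_{n}$ and pays for it by routing through the auxiliary family $\mathbb{F}^{1}_{m}$ and the identity $f_{n+1}+f_{n-1}=\ell_n$. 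The identifications you flag ($\mathbb{F}'_{n}\backslash e_n=\mathbb{F}'_{n-1}$ and $\mathbb{F}'_{n}\backslash e_n\backslash e_{n-1}=\mathbb{F}'_{n-2}$) are immediate from the signed rotations, so there is no gap.
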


\begin{proof}
If $n=3$, then $\kappa(\mathbb{F}'^{3}_{3})=4=\ell_{3}$. For $n\geq 4$,
\[I_{n}+A(\mathbb{F}'^{n}_{n})=\left(
\begin{array}{ccccccc}
	1 & 0 & 1 &\cdots& 0 & 0 & 0 \\
	0 & 1 & 1 & 0 & \cdots & 0 & 0 \\
	-1&-1 & 1 & 1 & \cdots & 0 & 0 \\
	\vdots&\vdots&\vdots&\vdots&\vdots&\vdots& \vdots\\
	0 & 0 & 0 & \cdots & 1 & 1 & 0 \\
	0 & 0 & 0 & \cdots &-1 & 1 & 1 \\
	0 & 0 & 0 & \cdots & 0 &-1 & 2 \\
\end{array}
\right).\]
By Theorem \ref{quasitreeTH}, the number of quasi-trees of $\mathbb{F}'^{n}_{n}$ is given by the determinant of the matrix $I_{n}+A(\mathbb{F}'^{n}_{n})$.
Let $M:=I_{n}+A(\mathbb{F}'^{n}_{n}).$
We expand the determinant of $M$ along the first column. The submatrix $M[1, 1]$ is the matrix $I_{n-1}+A(\mathbb{F}'^{n}_{n}\backslash e_{1})$ whose determinant equals the number of quasi-trees of $\mathbb{F}'^{n}_{n}\backslash e_{1}$ by Theorem \ref{quasitreeTH}. Note that $\mathbb{F}'^{n}_{n}\backslash e_{1}=\mathbb{F}^{1}_{n-1}.$ Hence,
	\begin{eqnarray*}
		\det(I_{n}+A(\mathbb{F}'^{n}_{n}))&=& \det(M[1, 1])+\sum_{i=2}^{n}(-1)^{1+i}m_{i,1}\cdot \det(M[i,1])\\
		&=&   \kappa(\mathbb{F}^{1}_{n-1})+(-1)^{3}m_{1,3}m_{2,2}m_{3,1}\kappa(\mathbb{F}^{1}_{n-3})\\
		&=& \kappa(\mathbb{F}^{1}_{n-1})+\kappa(\mathbb{F}^{1}_{n-3})\\
		&=&f_{n+1}+f_{n-1}\\
		&=&\ell_{n}.
	\end{eqnarray*}
\end{proof}


Let $\mathbb{W}^{i}_{n}~(1\leq i\leq n)$ denote the non-orientable bouquet obtained by twisting the $i$-th edge in $\mathbb{W}_n$. Note that $\mathbb{W}^{i}_{n}=\mathbb{W}^{1}_{n}$.
Then we discuss the number of quasi-trees of $\mathbb{W}^{1}_{n}$.
The signed rotation of $\mathbb{W}^{1}_{n}$ is $$(-1,n,2,1,3,2,4,3,\cdots,i, i-1,i+1,i,\cdots,n-1,n-2,n,n-1),$$ whose framed chord diagram $D(\mathbb{W}^{1}_{n})$ consists of the pairs $$\{(1,4),(3,6),(5,8),\cdots,(2n-3,2n),(2n-1,2)\},$$ where chord $(1,4)$ is endowed with 1, others are endowed with 0.

\begin{theorem}
The number of quasi-trees of $\mathbb{W}^{1}_{n}~(n\geq 3)$ is $2f_{n+1}-1+(-1)^{n+1}$.
\end{theorem}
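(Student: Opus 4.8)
The plan is to follow the pattern of the preceding theorems in this subsection. Since $\mathbb{W}^1_n$ is obtained from the orientable bouquet $\mathbb{W}_n$ by adding a half-twist to the single edge $e_1$, it has exactly one non-orientable loop, so Theorem \ref{quasitreeTH} applies and gives $\kappa(\mathbb{W}^1_n)=\det(I_n+A(\mathbb{W}^1_n))$. I would then write $M:=I_n+A(\mathbb{W}^1_n)$; this is the ``cycle-type'' matrix obtained from $I_n+A(\mathbb{W}_n)$ by replacing the $(1,1)$-entry $1$ by $2$, since the chord corresponding to $e_1$ now carries framing $1$, all other entries (in particular the two corner entries $\pm1$ coming from the intersections of $e_1$ with $e_2$ and $e_n$ in $C_n$) being unchanged.

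Next I would expand $\det M$ along the first column exactly as in the proof of Theorem \ref{quasitreeTH}: writing the $(1,1)$-entry $2$ as $1+1$ peels off one copy of the cofactor $\det(M[1,1])$, and the remaining sum is precisely the first-column cofactor expansion of $I_n+A((\mathbb{W}^1_n)^{\tau(e_1)})=I_n+A(\mathbb{W}_n)$, because that matrix differs from $M$ only in the $(1,1)$-entry. Since $M[1,1]=I_{n-1}+A(\mathbb{W}^1_n\backslash e_1)$ and $\mathbb{W}^1_n\backslash e_1$ is orientable, Theorem \ref{key1} identifies $\det(M[1,1])$ with $\kappa(\mathbb{W}^1_n\backslash e_1)$, so that
\[\kappa(\mathbb{W}^1_n)=\kappa(\mathbb{W}^1_n\backslash e_1)+\kappa(\mathbb{W}_n).\]
Deleting the two ends of $e_1$ from the signed rotation of $\mathbb{W}^1_n$ and relabelling $i\mapsto i-1$ gives, after a cyclic shift, the signed rotation of $\mathbb{F}_{n-1}$; hence $\mathbb{W}^1_n\backslash e_1=\mathbb{F}_{n-1}$, so $\kappa(\mathbb{W}^1_n\backslash e_1)=f_n$ by \cite{Merino}. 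Together with $\kappa(\mathbb{W}_n)=a_n$ this yields $\kappa(\mathbb{W}^1_n)=f_n+a_n$.

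Finally I would simplify $f_n+a_n$ using only the relations collected in the introduction: from $a_n=\ell_n-1-(-1)^n$, then $\ell_n=f_{n+1}+f_{n-1}$ (the relation $f_m+f_{m-2}=\ell_{m-1}$ with $m=n+1$), and then $f_{n-1}+f_n=f_{n+1}$, one gets $f_n+a_n=2f_{n+1}-1-(-1)^n=2f_{n+1}-1+(-1)^{n+1}$. The only steps requiring a little attention are the identification $\mathbb{W}^1_n\backslash e_1=\mathbb{F}_{n-1}$ (a relabelling of signed rotations, plus the remark that this deletion is orientable so that Theorem \ref{key1} may be used) and the verification that the first-column expansion borrowed from the proof of Theorem \ref{quasitreeTH} is purely formal and applies here verbatim; neither is a genuine obstacle, and the concluding identity is immediate.
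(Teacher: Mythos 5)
Your proposal is correct, but it takes a genuinely different (and cleaner) route than the paper. The paper expands $\det(I_n+A(\mathbb{W}^1_n))$ along the first column into \emph{three} nonzero contributions — $2\det(M[1,1])$, $\det(M[2,1])$ and $(-1)^{n+1}\det(M[n,1])$ — and then evaluates the latter two cofactors by further explicit expansions, obtaining $f_{n-1}-1$ and $1+(-1)^{n+1}f_{n-1}$ before assembling the answer. You instead split the $(1,1)$-entry $2=1+1$ by multilinearity, which is exactly the decomposition $\kappa(B)=\kappa(B\backslash e_1)+\kappa(B^{\tau(e_1)})$ already established in the proof of Theorem \ref{quasitreeTH}; this reduces the whole computation to $\kappa(\mathbb{F}_{n-1})+\kappa(\mathbb{W}_n)=f_n+a_n$, both values being available from Merino's theorem, and the final identity $f_n+a_n=2f_{n+1}-1+(-1)^{n+1}$ follows from the relations $a_n=\ell_n-1-(-1)^n$ and $f_{n+1}+f_{n-1}=\ell_n$ recorded in the introduction. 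Your argument buys brevity and avoids the delicate sign-tracking in the corner cofactors (the step most prone to error in the paper's computation); the paper's version buys self-containedness at the level of the matrix, in that it does not invoke the value $\kappa(\mathbb{W}_n)=a_n$ or the Lucas--Mersenne identity. All the individual steps you flag — the identification $\mathbb{W}^1_n\backslash e_1=\mathbb{F}_{n-1}$ (which the paper also asserts), the applicability of Theorem \ref{key1} to this orientable deletion, and the formal validity of the column split — check out.
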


\begin{proof}
For $n\geq 3$,
\[I_{n}+A(\mathbb{W}^{1}_{n})=\left(
\begin{array}{ccccccc}
	2 & 1 & 0 & \cdots & 0 &  0 & -1 \\
	-1 & 1 & 1 & 0 & \cdots & 0 & 0 \\
	0 & -1 & 1 & 1 & \cdots & 0 & 0 \\
	\vdots & \vdots & \vdots & \vdots & \vdots & \vdots& \vdots  \\
	0 & 0 & 0 & \cdots & 1 & 1 & 0 \\
	0 & 0 & 0 & \cdots & -1 & 1 & 1 \\
1 & 0& 0 & \cdots & 0 & -1  & 1 \\
\end{array}
\right).\]
By Theorem \ref{quasitreeTH}, the number of quasi-trees of $\mathbb{W}^{1}_{n}$ is given by the determinant of the matrix $I_{n}+A(\mathbb{W}^{1}_{n})$.
Let $M:=I_{n}+A(\mathbb{W}^{1}_{n})$. We expand the determinant of $M$ along the first column. The submatrix $M[1, 1]$ is the matrix $I_{n-1}+A(\mathbb{W}^{1}_{n}\backslash e_{1})$ whose determinant equals the number of quasi-trees of $\mathbb{W}^{1}_{n}\backslash e_{1}$ by Theorem \ref{key1}. Note that $\mathbb{W}^{1}_{n}\backslash e_{1}=\mathbb{F}_{n-1}$. Hence
	\begin{eqnarray*}
		\det(I_{n}+A(\mathbb{W}^{1}_{n}))&=&\sum_{i=1}^{n}(-1)^{i+1}m_{i,1}\cdot \det(M[i,1])\\
		&=& 2\cdot \det(M[1, 1])+(-1)^{2+1}m_{2,1}\cdot \det(M[2,1])+(-1)^{n+1}m_{n,1}\cdot \det(M[n,1])\\
		&=&2\kappa(\mathbb{F}_{n-1})+ \det(M[2,1])+(-1)^{n+1}\cdot \det(M[n,1]).
	\end{eqnarray*}
Since
\begin{eqnarray*}
\det(M[2,1])&=& 1\cdot \det(M[2,1][1,1])+(-1)^{1+n-1}\cdot(-1)\cdot \det(M[2,1][1,n-1])\\
 &=&\kappa(\mathbb{F}_{n-2})+(-1)^{n+1}(-1)^{n-2}\\
 &=&\kappa(\mathbb{F}_{n-2})-1\\
  &=&f_{n-1}-1,
\end{eqnarray*}
and
\begin{eqnarray*}
	\det(M[n,1]) &=& 1\cdot \det(M[n,1][1,1])+(-1)^{1+n-1}\cdot(-1)\cdot \det(M[n,1][1,n-1])\\
	&=&1+(-1)^{n+1}\kappa(\mathbb{F}_{n-2})\\
	&=&1+(-1)^{n+1}f_{n-1}.
\end{eqnarray*}
It follows that
\begin{eqnarray*}
\kappa(\mathbb{W}^{1}_{n})&=&2\kappa(\mathbb{F}_{n-1})+ \det(M[2,1])+(-1)^{n+1}\cdot \det(M[n,1])\\	
&=&2\kappa(\mathbb{F}_{n-1})+(f_{n-1}-1)+(-1)^{n+1}(1+(-1)^{n+1}f_{n-1})\\
		&=&2f_{n}+f_{n-1}-1+(-1)^{n+1}+f_{n-1}\\
			&=&2f_{n+1}-1+(-1)^{n+1}.
	\end{eqnarray*}
\end{proof}

We list the number of quasi-trees of bouquets  (discussed in Section 5.1) as shown in Table \ref{tab:my_label}.

 \begin{table}
	\centering
	\caption{The number of quasi-trees of some classes of bouquets}
	\begin{tabular}{ccccc}
		\hline
		~~~~~~Bouquet $B$~~~~~~& ~~~~~~$\kappa(B)$~~~~~~ \\
		\hline
		$\mathbb{F}_{n},~~n\geq 0$	& ${f}_{n+1}$  \\
	\hline$\mathbb{W}_{n},~~n\geq 3$	& ${a}_{n}$  \\
	\hline$\mathbb{F}'_{n},~~n\geq 2$	& ${\ell}_{n-1}$   \\
	\hline$\mathbb{F}^{1}_{n},~~n\geq 1$	& ${f}_{n+2}$ \\		
	\hline$\mathbb{W}^{i}_{n},~~n\geq 3$ 	&  $2f_{n+1}-1+(-1)^{n+1}$ \\
		\hline$\mathbb{F}'^{1}_{n},~~n\geq 2$	& ${f}_{n}+\ell_{n-1}$   \\
		\hline$\mathbb{F}'^{n}_{n},~~n\geq 3$	& $\ell_{n}$   \\
\hline
	\end{tabular}
	
	\label{tab:my_label}
\end{table}

\subsection{Ribbon graph theory proof}

Let $G=(V, E)$ be a ribbon graph and let $n$ be positive integer.
Then we define
$$\mathcal{F}_n(G)=\{F\subseteq E~|~\text{the number of boundary components of}~ (V, F)~\text{is}~n\},$$
and its cardinality will be denoted by $f_n(G)$.

\begin{theorem}\label{key5}
	Let G be a ribbon graph and $e\in E(G)$. Then
	
	\[f_n(G)=f_n(G\backslash e)+ f_n(G/e).\]
In particular $($when $n=1)$ \[\kappa(G)=\kappa(G\backslash e)+ \kappa(G/e).\]
\end{theorem}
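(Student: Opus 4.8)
The plan is to prove the deletion-contraction relation $f_n(G) = f_n(G\backslash e) + f_n(G/e)$ by a direct bijective/partitioning argument on spanning ribbon subgraphs, sorting them according to whether they contain the edge $e$. First I would observe that every subset $F\subseteq E(G)$ either contains $e$ or does not, so
\begin{equation*}
\mathcal{F}_n(G) = \{F\in\mathcal{F}_n(G)\mid e\notin F\}\ \dot\cup\ \{F\in\mathcal{F}_n(G)\mid e\in F\},
\end{equation*}
and the whole task reduces to identifying each of these two pieces with $\mathcal{F}_n(G\backslash e)$ and $\mathcal{F}_n(G/e)$ respectively.

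For the first piece, the subsets $F\subseteq E(G)$ with $e\notin F$ are exactly the subsets of $E(G\backslash e)$, and for such $F$ the spanning ribbon subgraph $(V,F)$ of $G$ is literally the same ribbon graph as the spanning ribbon subgraph $(V,F)$ of $G\backslash e$; in particular it has the same number of boundary components. This gives $\{F\in\mathcal{F}_n(G)\mid e\notin F\} = \mathcal{F}_n(G\backslash e)$ on the nose, hence the $f_n(G\backslash e)$ term. For the second piece I would use the definition $G/e = G^{\delta(e)}\backslash e$ together with the fundamental fact, stated earlier via Proposition \ref{tauandtwist} / the discussion of partial duality, that partial duality preserves the number of boundary components of corresponding spanning subgraphs: for $A\subseteq E(G)$ and any $F$, the spanning ribbon subgraphs $(V,F)$ of $G$ and the corresponding spanning ribbon subgraph of $G^{\delta(A)}$ on the edge set $F\Delta A$ (equivalently, $F$ with the edges in $A$ interpreted in the dual) have the same number of boundary components — this is precisely what underlies $D(G^{\delta(A)}) = D(G)*A$ at the level of the delta-matroid when $n=1$, and the same argument works for general $n$ since partial duality is an operation on the surface that does not change boundary components. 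Applying this with $A = \{e\}$: a subset $F\ni e$ of $E(G)$ corresponds to the subset $F\backslash e$ of $E(G^{\delta(e)}\backslash e) = E(G/e)$, and the number of boundary components is preserved. Thus $\{F\in\mathcal{F}_n(G)\mid e\in F\}$ is in bijection with $\mathcal{F}_n(G/e)$, giving the $f_n(G/e)$ term. Adding the two pieces yields the claimed identity, and specializing to $n=1$ gives $\kappa(G)=\kappa(G\backslash e)+\kappa(G/e)$ since $\kappa = f_1$.

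The main obstacle is making precise and justifying the boundary-component-preservation statement for partial duality in the case $e\in F$ — i.e.\ that contracting $e$ does not change the boundary-component count of the relevant spanning subgraph. The cleanest way to handle this is to invoke Table \ref{Fig3}, which records the local effect of deletion and contraction on a ribbon graph: a case analysis on the type of $e$ (orientable loop, non-orientable loop, or non-loop, and in the non-loop case whether or not the two incident vertices are joined elsewhere) shows in each case that the local surgery describing $G/e$ changes neither the surface obtained by gluing up $(V,F)$ nor its number of boundary components when $e\in F$. Alternatively, and more efficiently, one cites the known identity $\kappa(G^{\delta(A)}) = \kappa(G)$ from \cite{ChunJCTA} (Theorem 5.1) and its routine strengthening to all $f_n$, which is standard in the ribbon-graph literature; I would state this strengthening explicitly as the key lemma and then the deletion-contraction relation follows formally as above.
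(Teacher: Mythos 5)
Your proposal is correct and follows essentially the same route as the paper: partition $\mathcal{F}_n(G)$ according to whether $e\in F$, identify the $e\notin F$ part with $\mathcal{F}_n(G\backslash e)$ directly, and identify the $e\in F$ part with $\mathcal{F}_n(G/e)$ via the fact that passing to the partial dual $G^{\delta(e)}$ preserves the boundary-component count of the corresponding spanning subgraph (which the paper also justifies by appeal to Table \ref{Fig3}). Your discussion of how to justify that key preservation step is, if anything, more explicit than the paper's.
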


\begin{proof}
Note that $F\in \mathcal{F}_n(G)$ and  $e\notin F$ if and only if $F\in \mathcal{F}_n(G\backslash e)$. From Table \ref{Fig3} we see
that $F\in \mathcal{F}_n(G)$ and $e\in F$ if and only if $F\backslash e\in \mathcal{F}_n(G^{\delta(e)})$ if and only if $F\backslash e\in \mathcal{F}_n(G^{\delta(e)}\backslash e)=\mathcal{F}_n(G/e)$.
Hence there is a natural 1-1 correspondence between the element of $\mathcal{F}_n(G)$ and the element of $\mathcal{F}_n(G\backslash e)$ or $\mathcal{F}_n(G/e)$. It follows that
\[f_n(G)=f_n(G\backslash e)+ f_n(G/e).\]
\end{proof}

\begin{corollary}
Let $G$ be a ribbon graph and $A\subseteq E(G)$. Then $$f_n(G)=f_n(G^{\delta(A)}).$$
In particular $($when $n=1)$ \[\kappa(G)=\kappa(G^{\delta(A)}).\]
\end{corollary}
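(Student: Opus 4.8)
The plan is to deduce the corollary from the deletion--contraction relation of Theorem~\ref{key5}, using only two elementary facts about partial duality: the involution $(G^{\delta(e)})^{\delta(e)} = G$ recorded in Section~2, and the identity $G/e = G^{\delta(e)}\backslash e$ that \emph{defines} contraction. The observation driving the argument is that passing to the partial dual with respect to a single edge $e$ merely interchanges the ribbon graphs $G\backslash e$ and $G/e$, while the recursion $f_n(G) = f_n(G\backslash e) + f_n(G/e)$ is symmetric in those two terms and hence blind to the swap.

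Concretely, I would first settle the one-edge case $A = \{e\}$. From $G/e := G^{\delta(e)}\backslash e$ we have $G^{\delta(e)}\backslash e = G/e$, and applying $\delta(e)$ a second time, $G^{\delta(e)}/e = (G^{\delta(e)})^{\delta(e)}\backslash e = G\backslash e$. Applying Theorem~\ref{key5} to the ribbon graph $G^{\delta(e)}$ with the edge $e$, and then to $G$ with $e$, gives
\[
f_n(G^{\delta(e)}) = f_n(G^{\delta(e)}\backslash e) + f_n(G^{\delta(e)}/e) = f_n(G/e) + f_n(G\backslash e) = f_n(G).
\]

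For a general $A\subseteq E(G)$ I would induct on $|A|$. The base case $A=\emptyset$ is immediate since $G^{\delta(\emptyset)}=G$. For $|A|\geq 1$, pick $e\in A$, set $A'=A\setminus\{e\}$, and use the composition law for partial duals \cite{Chmutov}, which (since $A'$ and $\{e\}$ are disjoint) gives $G^{\delta(A)}=(G^{\delta(A')})^{\delta(e)}$. Then
\[
f_n(G^{\delta(A)}) = f_n\!\big((G^{\delta(A')})^{\delta(e)}\big) = f_n(G^{\delta(A')}) = f_n(G),
\]
where the middle equality is the one-edge case applied to the ribbon graph $G^{\delta(A')}$ and the last is the inductive hypothesis. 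Taking $n=1$ and recalling $\kappa=f_1$ yields $\kappa(G)=\kappa(G^{\delta(A)})$. There is no real obstacle here: the whole proof is a short consequence of Theorem~\ref{key5}, and the only point that warrants a word of comment is the reduction from $A$ to a single edge, which relies on the standard identity $\delta(A')\,\delta(\{e\}) = \delta(A'\cup\{e\})$ for disjoint $A'$ and $\{e\}$.
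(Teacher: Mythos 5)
Your proposal is correct and follows essentially the same route as the paper: reduce to a single edge via the composition law for partial duals, then observe that $\delta(e)$ swaps $G\backslash e$ and $G/e$ so that the symmetric recursion of Theorem~\ref{key5} gives $f_n(G)=f_n(G^{\delta(e)})$. The paper merely states the reduction to one edge without spelling out the induction, which you make explicit.
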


\begin{proof}
It suffices to prove this for a single edge $e$ of $A$, i.e., $f_n(G)=f_n(G^{\delta(e)})$.
Since $G/e=G^{\delta(e)}\backslash e$ and $G\backslash e=(G^{\delta(e)})^{\delta(e)}\backslash e=G^{\delta(e)}/e$, it follows that
$$f_n(G)=f_n(G\backslash e)+f_n(G/e)=f_n(G^{\delta(e)}/e)+f_n(G^{\delta(e)}\backslash e)
=f_n(G^{\delta(e)}),$$
by Theorem \ref{key5}.
\end{proof}


We give alternative proofs of the number of quasi-trees of bouquets as shown in Table \ref{tab:my_label} by Theorem \ref{key5}.

\begin{description}
  \item[Alternative proof of $\kappa(\mathbb{F}'_n)~(n\geq 2)$.]  \[\kappa(\mathbb{F}'_n)=\kappa(\mathbb{F}'_n\backslash e_n)+ \kappa(\mathbb{F}'_n/ e_n)=\kappa(\mathbb{F}'_{n-1})+ \kappa(\mathbb{F}'_{n-2}\vee P_2),\]
and
\[\kappa(\mathbb{F}'_{n-2}\vee P_2)=\kappa((\mathbb{F}'_{n-2}\vee P_2)\backslash e_{n-1})+\kappa( (\mathbb{F}'_{n-2}\vee P_2) / e_{n-1})=0+\kappa(\mathbb{F}'_{n-2})=\kappa(\mathbb{F}'_{n-2}).\]
Therefore,
\[\kappa(\mathbb{F}'_n)=\kappa(\mathbb{F}'_{n-1})+ \kappa(\mathbb{F}'_{n-2}).\]
By calculation, $\kappa(\mathbb{F}'_2)=1=\ell_{1}$ and  $\kappa(\mathbb{F}'_3)=3=\ell_{2}$, we obtain that $\kappa(\mathbb{F}'_n)=\ell_{n-1}.$
  \item[Alternative proof of $\kappa(\mathbb{F}^{1}_n)~(n\geq 1)$.]  \[\kappa(\mathbb{F}^{1}_n)=\kappa(\mathbb{F}^{1}_n\backslash e_n)+ \kappa(\mathbb{F}^{1}_n/ e_n)=\kappa(\mathbb{F}^{1}_{n-1})+ \kappa(\mathbb{F}^{1}_{n-2}\vee P_2),\]
and
\[\kappa(\mathbb{F}^{1}_{n-2}\vee P_2)=\kappa((\mathbb{F}^{1}_{n-2}\vee P_2) \backslash e_{n-1})+\kappa( (\mathbb{F}^{1}_{n-2}\vee P_2) / e_{n-1})=0+\kappa(\mathbb{F}^{1}_{n-2})=\kappa(\mathbb{F}^{1}_{n-2}).\]
It follows that
\[\kappa(\mathbb{F}^{1}_n)=\kappa(\mathbb{F}^{1}_{n-1})+ \kappa(\mathbb{F}^{1}_{n-2}).\]

By calculation, $\kappa(\mathbb{F}^{1}_1)=2=f_{3}$ and  $\kappa(\mathbb{F}^{1}_2)=3=f_{4}$, we have $\kappa(\mathbb{F}^{1}_n)=f_{n+2}.$
  \item[Alternative proof of $\kappa(\mathbb{F}'^{1}_n)~(n\geq 2)$.] \[\kappa(\mathbb{F}'^{1}_n)=\kappa(\mathbb{F}'^{1}_n\backslash e_n)+ \kappa(\mathbb{F}'^{1}_n/ e_n)=\kappa(\mathbb{F}'^{1}_{n-1})+ \kappa(\mathbb{F}'^{1}_{n-2}\vee P_2),\]
and
\[\kappa(\mathbb{F}'^{1}_{n-2}\vee P_2)=\kappa((\mathbb{F}'^{1}_{n-2}\vee P_2) \backslash e_{n-1})+\kappa( (\mathbb{F}'^{1}_{n-2}\vee P_2) / e_{n-1})=0+\kappa(\mathbb{F}'^{1}_{n-2})=\kappa(\mathbb{F}'^{1}_{n-2}).\]
Hence
\[\kappa(\mathbb{F}'^{1}_n)=\kappa(\mathbb{F}'^{1}_{n-1})+ \kappa(\mathbb{F}'^{1}_{n-2}).\]

By calculation,  $\kappa(\mathbb{F}'^{1}_2)=2=f_2+\ell_1$ and $\kappa(\mathbb{F}'^{1}_3)=5=f_3+\ell_2$,  we see that $\kappa(\mathbb{F}'^{1}_n)=f_{n}+\ell_{n-1}.$
  \item[Alternative proof of $\kappa(\mathbb{F}'^{n}_n)~(n\geq 3)$.]

\[\kappa(\mathbb{F}'^{n}_n)=\kappa(\mathbb{F}'^{n}_n \backslash e_n)+ \kappa(\mathbb{F}'_n/ e_n)=\kappa(\mathbb{F}'_{n-1})+ \kappa(\mathbb{F}'^{n-1}_{n-1}),\]
and
\[\kappa(\mathbb{F}'^{n}_n)=\ell_{n-2}+ \kappa(\mathbb{F}'^{n-1}_{n-1}).\]

By calculation,  $\kappa(\mathbb{F}'^{3}_3)=4=\ell_{3}$, we obtain that $\kappa(\mathbb{F}'^{n}_n)=\ell_{n}.$

\item[Alternative proof of $\kappa(\mathbb{W}^{1}_{n})~(n\geq 3)$.]
\begin{eqnarray*}
&~&\mathbb{W}^{1}_{n}=(-1,n,2,1,3,2,4,3,\cdots,i, i-1,i+1,i,\cdots,n-1,n-2,n,n-1),\\
&~&\mathbb{W}^{1}_{n}\backslash e_{1}=(n,2,3,2,4,3,\cdots,i, i-1,i+1,i,\cdots,n-1,n-2,n,n-1)=\mathbb{F}_{n-1},\\
&~&\mathbb{W}^{1}_{n}/ e_{1}=(-2,-n,3,2,4,3,\cdots,i, i-1,i+1,i,\cdots,n-1,n-2,n,n-1),\\
&~&\mathbb{W}^{1}_{n}/e_{1}\backslash e_{2}=(-n,3,4,3,\cdots,i, i-1,i+1,i,\cdots,n-1,n-2,n,n-1)=\mathbb{F}^{1}_{n-2},\\
&~&\mathbb{W}^{1}_{n}/e_{1}/e_{2}=
(-3,n,4,3,\cdots,i, i-1,i+1,i,\cdots,n-1,n-2,n,n-1)=\mathbb{W}^{1}_{n-2}.
\end{eqnarray*}
We have
\begin{eqnarray*}
\kappa(\mathbb{W}^{1}_{n})&=&\kappa(\mathbb{W}^{1}_{n}\backslash e_{1})+\kappa(\mathbb{W}^{1}_{n}/e_{1})\\
&=&\kappa(\mathbb{W}^{1}_{n}\backslash e_{1})+\kappa(\mathbb{W}^{1}_{n}/e_{1}\backslash e_{2})+\kappa(\mathbb{W}^{1}_{n}/e_{1}/ e_{2})\\
&=&\kappa(\mathbb{F}_{n-1})+\kappa(\mathbb{F}^{1}_{n-2})+
\kappa(\mathbb{W}^{1}_{n-2})\\
&=&f_{n}+f_{n}+\kappa(\mathbb{W}^{1}_{n-2}).
\end{eqnarray*}
Then
\[\kappa(\mathbb{W}^{1}_{n})-\kappa(\mathbb{W}^{1}_{n-2})=2f_{n}, where~ n\geq 5.\]
By calculation, $\kappa(\mathbb{W}^{1}_{3})=6, \kappa(\mathbb{W}^{1}_{4})=8,$
we obtain that $$\kappa(\mathbb{W}^{1}_{n})=2f_{n+1}-1+(-1)^{n+1}.$$
\end{description}

\subsection{Combinatorial interpretations}


\begin{lemma}[\cite{Merino}]\label{perfect1}
	The $(n+1)$-th Fibonacci number equals the number of perfect matchings in $P_2\times P_n$.
\end{lemma}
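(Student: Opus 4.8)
The plan is to prove Lemma~\ref{perfect1} by a standard transfer-matrix / recursion argument, showing that the number $p_n$ of perfect matchings of the grid graph $P_2\times P_n$ (the $2\times n$ ladder) satisfies the Fibonacci recurrence with the correct initial conditions. First I would fix notation: label the two rows of vertices $a_1,\dots,a_n$ and $b_1,\dots,b_n$, where column $i$ consists of the edge $a_ib_i$ (a ``rung'') and consecutive columns are joined by the two ``rail'' edges $a_ia_{i+1}$ and $b_ib_{i+1}$. Set $p_n := |\mathcal{M}(P_2\times P_n)|$, the number of perfect matchings, with the conventions $p_0 = 1$ (empty matching of the empty graph) and $p_1 = 1$ (the single rung).

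The key step is the recursion $p_n = p_{n-1} + p_{n-2}$ for $n\ge 2$, obtained by casework on how vertex $a_n$ is matched. Since $a_n$ has only two neighbours, $b_n$ and $a_{n-1}$, either (i) $a_n$ is matched to $b_n$ via the rung in column $n$, in which case the remaining graph to be perfectly matched is exactly $P_2\times P_{n-1}$ on columns $1,\dots,n-1$, contributing $p_{n-1}$ matchings; or (ii) $a_n$ is matched to $a_{n-1}$ along the top rail, which forces $b_n$ to be matched to its only remaining neighbour $b_{n-1}$ along the bottom rail, leaving $P_2\times P_{n-2}$ on columns $1,\dots,n-2$ to be perfectly matched, contributing $p_{n-2}$ matchings. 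These two cases are disjoint and exhaustive, so $p_n = p_{n-1}+p_{n-2}$. Combined with $p_1 = 1 = f_2$ and $p_2 = 2 = f_3$ (the two matchings of the single square being the two rungs or the two rails), induction gives $p_n = f_{n+1}$ for all $n\ge 1$, which is the claim.

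I do not expect a genuine obstacle here; the only care needed is in handling the boundary/base cases cleanly (in particular pinning down $p_0$ and $p_1$ so the recurrence bootstraps correctly) and in verifying that case (ii) really does force both rail edges, which uses crucially that $P_2\times P_n$ has exactly two rows so that once $a_n$ leaves column $n$ the vertex $b_n$ has a unique available neighbour. Alternatively, one could cite this as the well-known fact that the $2\times n$ grid has $f_{n+1}$ perfect matchings (equivalently, domino tilings of a $2\times n$ board are counted by Fibonacci numbers), but since the paper states it as a lemma I would include the two-line recursion proof above for completeness, noting that it also appears in~\cite{Merino}.
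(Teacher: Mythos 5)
Your proof is correct: the casework on how $a_n$ is matched gives the Fibonacci recurrence $p_n=p_{n-1}+p_{n-2}$ with $p_1=1=f_2$ and $p_2=2=f_3$, hence $p_n=f_{n+1}$ under the paper's convention $f_1=f_2=1$. Note that the paper itself offers no proof of this lemma --- it is imported from \cite{Merino} as a known fact --- so your argument is simply the standard (and correct) way to supply the missing details.
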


\begin{figure}[!htbp]
	\centering
	\includegraphics[width=1\linewidth]{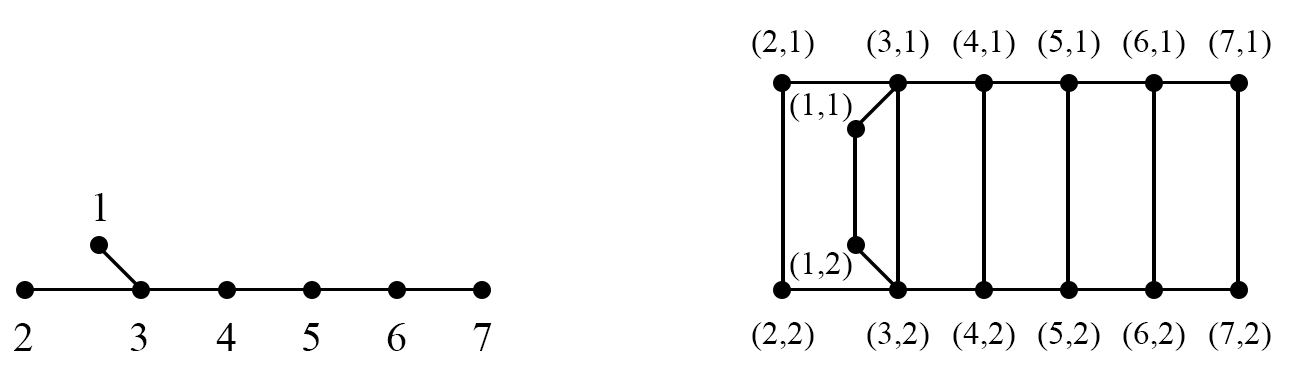}
	\caption{$T_7$ and $P_2\times T_7$}
	\label{fig:caterpillar}
\end{figure}

\begin{theorem}
	The $(n-1)$-th Lucas number equals the number of perfect matchings in $P_2\times T_n$, where $T_n$ is a special caterpillar as shown in Fig \ref{fig:caterpillar}.
\end{theorem}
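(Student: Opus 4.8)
The plan is to show $\#\mathrm{PM}(P_2\times T_n)=\ell_{n-1}$ by dissecting perfect matchings of the prism directly, reducing to the path case covered by Lemma~\ref{perfect1}, and then invoking the identity $f_n+f_{n-2}=\ell_{n-1}$. First I would record the precise shape of $T_n$ as read off from the matrix $I_n+A(\mathbb{F}'_n)$ in the proof of Theorem~\ref{main1} (equivalently, from Figure~\ref{fig:caterpillar}): $T_n$ is the path on the vertices $3,4,\dots,n$ together with two pendant vertices, $1$ and $2$, both joined to the path-endpoint $3$. For $n=3$ this is just $T_3=P_3$, so the statement is immediate from Lemma~\ref{perfect1}, since $\#\mathrm{PM}(P_2\times P_3)=f_4=\ell_2$; hence I may assume $n\ge 4$.

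Next I would set up the prism $P_2\times T_n$ with vertex set $\{(v,i):v\in V(T_n),\ i\in\{0,1\}\}$, vertical edges $(v,0)(v,1)$, and horizontal edges $(u,i)(v,i)$ for each $uv\in E(T_n)$. Since vertex $1$ is pendant, in any perfect matching $M$ the copy $(1,0)$ is matched either to $(1,1)$ (call vertex $1$ \emph{vertical}) or to $(3,0)$, in which case $(1,1)$ is forced to be matched to $(3,1)$ (call vertex $1$ \emph{horizontal}); the same dichotomy applies to vertex $2$. The two pendants cannot both be horizontal, for then $(3,0)$ would have to be matched simultaneously to $(1,0)$ and $(2,0)$. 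This leaves exactly three cases, and in each I delete the prism vertices already matched and count the completions of $M$. If both pendants are vertical, the remaining graph is the prism $P_2\times P_{n-2}$ over the path on $3,\dots,n$, contributing $\#\mathrm{PM}(P_2\times P_{n-2})=f_{n-1}$. If exactly one pendant is horizontal, then after matching its two copies to $(3,0)$ and $(3,1)$ the remaining graph is the prism $P_2\times P_{n-3}$ over the path on $4,\dots,n$, contributing $f_{n-2}$; there are two such cases. All three counts use Lemma~\ref{perfect1}, with the conventions $\#\mathrm{PM}(P_2\times P_1)=1=f_2$ and, for $n=4$, $P_{n-3}=P_1$.

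Summing the three contributions gives $\#\mathrm{PM}(P_2\times T_n)=f_{n-1}+2f_{n-2}=(f_{n-1}+f_{n-2})+f_{n-2}=f_n+f_{n-2}=\ell_{n-1}$, where the last equality is the identity stated in the introduction for $n\ge 3$. I expect the only delicate points to be reading off the exact shape of $T_n$ from the intersection matrix and handling the degenerate short paths $P_1$ (and, at the boundary, $P_0$) in the small cases; the rest is the routine ``vertical edge or not'' dissection of a prism matching. As an alternative that stays closer to Section~4, one can avoid the case analysis entirely: the intersection graph of $\mathbb{F}'_n$ is $T_n$, and since $A(\mathbb{F}'_n)$ is skew-symmetric one has $\det\big(A(\mathbb{F}'_n)[X]\big)=\mathrm{Pf}\big(A(\mathbb{F}'_n)[X]\big)^2$, which equals the square of the number of perfect matchings of the forest $T_n[X]$; expanding $\det(I_n+A(\mathbb{F}'_n))=\sum_{X}\det\big(A(\mathbb{F}'_n)[X]\big)$ and comparing with the dissection $\#\mathrm{PM}(P_2\times T_n)=\sum_{X\subseteq V(T_n)}\big(\#\mathrm{PM}(T_n[X])\big)^2$ then yields $\#\mathrm{PM}(P_2\times T_n)=\kappa(\mathbb{F}'_n)=\ell_{n-1}$ by Theorem~\ref{main1}.
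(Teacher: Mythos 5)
Your proof is correct and takes essentially the same route as the paper's: both dissect the perfect matchings of $P_2\times T_n$ according to whether the pendant vertices are covered by a vertical edge or by the pair of horizontal edges into vertex $3$, reduce each case to a prism over a path via Lemma~\ref{perfect1}, and conclude with the identity $f_n+f_{n-2}=\ell_{n-1}$. The only cosmetic difference is that you condition on both pendants (getting $f_{n-1}+2f_{n-2}$) while the paper conditions only on vertex $1$ (getting $f_n+f_{n-2}$ directly).
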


\begin{proof}
Let $Pf(P_2\times T_n)$ denote the number of perfect matchings in $P_2\times T_n$.
If we choose the edge $((1,1),(1,2))$ as one matching edge, then we need to find out all other $n-1$ matching edges in $P_2\times P_{n-1}$, whose number is $Pf(P_2\times P_{n-1})=f_n$ by Lemma \ref{perfect1}. If we choose edges $((1,1),(3,1))$ and $((1,2),(3,2))$ as two matching edges, then we need to find out all other $n-2$ matching edges in the disjoint between $P_2\times P_{n-3}$ and $P_2$ (with edge $((2,1),(2,2))$), whose number is $Pf(P_2\times P_{n-3})=f_{n-2}$ by Lemma \ref{perfect1}. Hence,
\[Pf(P_2\times T_n)=Pf(P_2\times P_{n-1})+Pf(P_2\times P_{n-3})=f_{n}+f_{n-2}=\ell_{n-1}.\]
\end{proof}

Given an $n \times n$ matrix $A$, let us define $E_{k}(A)$ as the sum of its principal minors of size $k > 0$, and $E_{0}(A)=1$. The following formula for the \emph{characteristic polynomial} of $A$ is well-known, see \cite{Roger}.
\[\det(tI_n-A)=E_{0}(A)t^{n}-E_{1}(A)t^{n-1}+\cdots+(-1)^{n}E_{n}(A).\]

The Fibonacci polynomial is defined as $f_{1}(x)=1$, $f_{2}(x)=x$ and $$f_{n+1}(x)=xf_{n}(x)+f_{n-1}(x).$$
The Lucas polynomial is defined as
$$\ell_{n-1}(x)=f_{n}(x)+f_{n-2}(x).$$
The first few polynomials are:
$\ell_{2}(x)=x^{2}+2$, $\ell_{3}(x)=x^{3}+3x$, $\ell_{4}(x)=x^{4}+4x^{2}+2$ and $\ell_{5}(x)=x^{5}+5x^{3}+5x$.

\begin{lemma}[\cite{Merino}]
The characteristic polynomial of the matrix $A(\mathbb{F}_{n})$ equals $(n+1)$-th Fibonacci
polynomial.
\end{lemma}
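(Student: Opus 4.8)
The plan is to read off the explicit shape of the matrix $A(\mathbb{F}_n)$ and then compute its characteristic polynomial using the standard three-term recurrence for tridiagonal determinants. Recall that the intersection graph of $\mathbb{F}_n$ is the path $P_n$, so chord $i$ intersects only chords $i-1$ and $i+1$; hence $A(\mathbb{F}_n)$ vanishes outside the sub- and super-diagonal. Since $\mathbb{F}_n$ is orientable every diagonal entry of $A(\mathbb{F}_n)$ is $0$, and by construction the matrix is skew-symmetric, with each superdiagonal entry equal to $\pm 1$ and the corresponding subdiagonal entry its negative; in fact (compare the lower-right block of $I_n+A(\mathbb{F}'_n)$ displayed in the proof of Theorem~\ref{main1}) one may take the superdiagonal entries to be $+1$ and the subdiagonal entries $-1$.

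First I would set $D_k(t)$ to be the determinant of the top-left $k\times k$ block of $tI_n-A(\mathbb{F}_n)$, so that $D_n(t)=\det(tI_n-A(\mathbb{F}_n))$ is the characteristic polynomial in question; each $D_k(t)$ is the determinant of a tridiagonal matrix with $t$ on the diagonal, $-1$ on the superdiagonal and $+1$ on the subdiagonal. Expanding $D_k(t)$ along its last row gives
\[
D_k(t) = t\,D_{k-1}(t) - (-1)(+1)\,D_{k-2}(t) = t\,D_{k-1}(t) + D_{k-2}(t).
\]
(Had the other sign choices been made, the off-diagonal pair straddling the diagonal would still multiply to $-1$ by skew-symmetry, so the recurrence, and hence the conclusion, is unaffected, in agreement with the remark following Theorem~\ref{key1}.) With the base cases $D_0(t)=1$ and $D_1(t)=t$, this is exactly the defining recurrence $f_{k+1}(x)=xf_k(x)+f_{k-1}(x)$ together with $f_1(x)=1$, $f_2(x)=x$ under the index shift $D_k\leftrightarrow f_{k+1}$. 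A one-line induction on $k$ then gives $D_k(t)=f_{k+1}(t)$ for all $k\ge 0$, and taking $k=n$ proves the lemma.

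I do not anticipate a genuine obstacle. The only point needing a little care is justifying the precise tridiagonal, skew-symmetric form of $A(\mathbb{F}_n)$ directly from the signed rotation and the rule defining the intersection matrix, and keeping the signs straight in the cofactor expansion; but, as noted, skew-symmetry forces the relevant product of off-diagonal entries to equal $-1$, so the Fibonacci recurrence emerges regardless of the arbitrary orientation choices. As an alternative one could instead expand $\det(tI_n-A(\mathbb{F}_n))=\sum_k(-1)^kE_k(A(\mathbb{F}_n))t^{n-k}$ and identify $E_k$ via Proposition~\ref{orient} with the number of $k$-edge quasi-trees of $\mathbb{F}_n$, but the tridiagonal recurrence is the most direct route.
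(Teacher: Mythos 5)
Your proof is correct: $A(\mathbb{F}_n)$ is indeed tridiagonal and skew-symmetric with zero diagonal, so the off-diagonal pair straddling each diagonal position of $tI_n-A(\mathbb{F}_n)$ multiplies to $-1$ and the cofactor expansion yields $D_k(t)=tD_{k-1}(t)+D_{k-2}(t)$ with $D_0=1$, $D_1=t$, which is exactly the Fibonacci-polynomial recurrence. The paper itself gives no proof (the lemma is quoted from Merino), but your expansion is the same three-term cofactor recurrence the paper uses for its own characteristic-polynomial computations in Section~5.3, so nothing further is needed.
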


\begin{theorem}
	The characteristic polynomials of the matrices $A(\mathbb{F}'_{n}), A(\mathbb{F}'^{1}_{n}), A(\mathbb{F}^{1}_{n}), A(\mathbb{W}^{1}_{n})$ and $A(\mathbb{F}'^{n}_{n})$ are as follows.

\begin{eqnarray*}
\det(tI_n-A)=\left\{\begin{array}{ll}
                    t\ell_{n-1}(t), & \mbox{if}~A=A(\mathbb{F}'_{n}),\\
t\ell_{n-1}(t)-f_{n}(t), & \mbox{if}~A=A(\mathbb{F}'^{1}_{n}),\\
f_{n+1}(t)-f_{n}(t), & \mbox{if}~A=A(\mathbb{F}^{1}_{n}),\\
(t-1)f_{n}(t)+2f_{n-1}(t)+(-1)^{n+1}-1, & \mbox{if}~A=A(\mathbb{W}^{1}_{n}),\\
                    t(\ell_{n-1}(t)-\ell_{n-2}(t)), & \mbox{if}~A=A(\mathbb{F}'^{n}_{n}).
                   \end{array}\right.
\end{eqnarray*}
\end{theorem}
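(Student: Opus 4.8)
The plan is to notice that, for every bouquet $B$ in the statement, $\det(tI_n-A(B))$ is simply the generating function of the quasi-trees of $B$ by number of edges, and then to run the deletion--contraction arguments of the previous subsections on this graded refinement of $\kappa$. Indeed, combining the expansion of the characteristic polynomial in terms of sums of principal minors, $\det(tI_n-M)=\sum_{k\ge0}(-1)^k\Big(\sum_{|X|=k}\det(M[X])\Big)t^{\,n-k}$, with Propositions \ref{orient} and \ref{nonorient-1} (applicable since each $B$ here has zero or one non-orientable loop) gives
\[\det(tI_n-A(B))=\sum_{F\in\mathcal F(D(B))}(-1)^{|F|}t^{\,n-|F|}=t^{\,n}P_B(-1/t),\qquad P_B(x):=\sum_{F\in\mathcal F(D(B))}x^{|F|}.\]
Setting $t=-1$ returns $(-1)^n\kappa(B)$ by Theorems \ref{key1} and \ref{quasitreeTH}; this both recovers the entries of Table \ref{tab:my_label} and gives a handy consistency check on the closed forms below. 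So the first step is to record this identity.

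The second step is to transport the deletion--contraction recursions already used above from $\kappa$ to $P_B$. The two bijections in the proof of Theorem \ref{key5} preserve, respectively, the size of a feasible set (when $e\notin F$) and decrease it by one (when $e\in F$), so $P_B(x)=P_{B\backslash e}(x)+x\,P_{B/e}(x)$ for each edge $e$; and $P$ is multiplicative under the one-vertex-joint, $P_{P\vee Q}=P_P\cdot P_Q$, with $P_{P_2}(x)=x$ for a single bridge and $P\equiv0$ for a two-vertex ribbon graph with no bridge. Running the same reductions as in the alternative proofs above (all the bouquets that appear still having at most one non-orientable loop) and rewriting everything via $\det(tI_n-A(B))=t^{|E(B)|}P_B(-1/t)$ yields $\det(tI_n-A(B_n))=t\det(tI_{n-1}-A(B_{n-1}))+\det(tI_{n-2}-A(B_{n-2}))$ for $B\in\{\mathbb{F}'_n,\mathbb{F}'^{1}_n,\mathbb{F}^{1}_n\}$; the mixed recursion $\det(tI_n-A(\mathbb{F}'^{n}_n))=t\det(tI_{n-1}-A(\mathbb{F}'_{n-1}))-\det(tI_{n-1}-A(\mathbb{F}'^{n-1}_{n-1}))$; and, from $\mathbb{W}^{1}_n\backslash e_1=\mathbb{F}_{n-1}$, $\mathbb{W}^{1}_n/e_1\backslash e_2=\mathbb{F}^{1}_{n-2}$, $\mathbb{W}^{1}_n/e_1/e_2=\mathbb{W}^{1}_{n-2}$, the three-term recursion $\det(tI_n-A(\mathbb{W}^{1}_n))=t\det(tI_{n-1}-A(\mathbb{F}_{n-1}))-t\det(tI_{n-2}-A(\mathbb{F}^{1}_{n-2}))+\det(tI_{n-2}-A(\mathbb{W}^{1}_{n-2}))$. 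Throughout one uses Merino's lemma $\det(tI_n-A(\mathbb{F}_n))=f_{n+1}(t)$, and the non-bouquet intermediates of the form $B'\vee P_2$ are treated purely by multiplicativity of $P$, so their (undefined) intersection matrices never appear.

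The last step, handling the cases in the order $\mathbb{F}'_n,\mathbb{F}^{1}_n,\mathbb{F}'^{1}_n,\mathbb{F}'^{n}_n,\mathbb{W}^{1}_n$, is to check that the claimed closed forms obey these recursions together with the evident base cases (the same small bouquets that anchored the $\kappa$-formulas, now carrying $t$, resp.\ $t-1$, on the diagonal of $A$). For the Fibonacci-type recursions this is immediate from $f_{k+1}(t)=tf_k(t)+f_{k-1}(t)$ and $\ell_{k-1}(t)=f_k(t)+f_{k-2}(t)$, which also give $\ell_{k-1}(t)=t\ell_{k-2}(t)+\ell_{k-3}(t)$ and $f_k(t)-f_{k-2}(t)=tf_{k-1}(t)$; these settle $\mathbb{F}'_n\mapsto t\ell_{n-1}(t)$, $\mathbb{F}^{1}_n\mapsto f_{n+1}(t)-f_n(t)$, $\mathbb{F}'^{1}_n\mapsto t\ell_{n-1}(t)-f_n(t)$, and then $\mathbb{F}'^{n}_n\mapsto t(\ell_{n-1}(t)-\ell_{n-2}(t))$ once $\det(tI_{n-1}-A(\mathbb{F}'_{n-1}))=t\ell_{n-2}(t)$ is known. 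I expect $\mathbb{W}^{1}_n$ to be the only real obstacle: its intersection graph is the cycle $C_n$, so $A(\mathbb{W}^{1}_n)$ carries the two ``corner'' entries $\pm1$ at positions $(1,n)$ and $(n,1)$, the reduction closes up only after two contractions, and the target polynomial contains the genuinely non-polynomial summand $(-1)^{n+1}-1$ that must be carried through the recursion and pinned down by the two base values $\det(tI_3-A(\mathbb{W}^{1}_3))$ and $\det(tI_4-A(\mathbb{W}^{1}_4))$. Concretely, using $\det(tI_{n-2}-A(\mathbb{F}^{1}_{n-2}))=f_{n-1}(t)-f_{n-2}(t)$ one verifies that the increment $\det(tI_n-A(\mathbb{W}^{1}_n))-\det(tI_{n-2}-A(\mathbb{W}^{1}_{n-2}))$ demanded by the recursion equals $t\big[(t-1)f_{n-1}(t)+2f_{n-2}(t)\big]$, which is also the corresponding increment of the proposed closed form, closing the induction. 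Alternatively, all of these recursions arise directly by expanding $\det(tI_n-A(B))$ along its first column just as in the determinant computations of the previous subsections, with $t$ kept indeterminate.
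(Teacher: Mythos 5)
Your proof is correct, but it takes a genuinely different route from the paper. The paper simply expands $\det(tI_n-A)$ along the first column, exactly as in the $\kappa$-computations of Section 5.1 but with $t$ (resp.\ $t-1$) on the diagonal — the option you mention only in your closing sentence. Your primary argument instead establishes the structural identity $\det(tI_n-A(B))=\sum_{F\in\mathcal F(D(B))}(-1)^{|F|}t^{n-|F|}$ via the principal-minor expansion together with Propositions \ref{orient} and \ref{nonorient-1}, and then runs the deletion--contraction recursions of Section 5.2 on the size-graded generating function $P_B$. This buys something real: it explains \emph{why} these characteristic polynomials satisfy Fibonacci/Lucas-type recursions (they are, up to sign and reversal, rank generating functions of the quasi-tree delta-matroids), it shows the coefficients alternate in sign and count quasi-trees by size, and it unifies this theorem with the ``alternative proofs'' subsection; the price is reliance on the principal-unimodularity results, whereas the paper's cofactor expansion is self-contained linear algebra. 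Two small imprecisions worth fixing: your parenthetical ``all the bouquets that appear still having at most one non-orientable loop'' is false for the intermediate $\mathbb{W}^1_n/e_1$, which has two non-orientable loops (this is harmless, since you only apply the determinant identity to the terminal bouquets $\mathbb{W}^1_n$, $\mathbb{F}_{n-1}$, $\mathbb{F}^1_{n-2}$, $\mathbb{W}^1_{n-2}$ and handle the intermediate purely at the level of $P$ — but say so explicitly); and ``$P\equiv 0$ for a two-vertex ribbon graph with no bridge'' should read ``for a \emph{disconnected} ribbon graph,'' which is what $(B'\vee P_2)\setminus e$ actually is and is the correct reason it contributes nothing. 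With those wording repairs, the recursions you derive, the polynomial identities $f_{k+1}(t)=tf_k(t)+f_{k-1}(t)$, $\ell_{k-1}(t)=t\ell_{k-2}(t)+\ell_{k-3}(t)$, $f_k(t)-f_{k-2}(t)=tf_{k-1}(t)$, and the increment computation for $\mathbb{W}^1_n$ all check out, and the base cases pin everything down.
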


\begin{proof}
Let $M:=tI_{n}-A$, where $A$ is equal to $A(\mathbb{F}'_{n}), A(\mathbb{F}'^{1}_{n}), A(\mathbb{F}^{1}_{n}), A(\mathbb{W}^{1}_{n})$ and $A(\mathbb{F}'^{n}_{n})$, respectively. The result follows by expanding the
determinant of $M$ along the first column as follows.
	\begin{eqnarray*}
		\det(tI_{n}-A(\mathbb{F}'_{n}))&=&t\cdot \det(M[1, 1])+\sum_{i=2}^{n}(-1)^{i+1}m_{i,1}\cdot \det(M[i,1])\\
		&=& t \det(tI_{n-1}-A(\mathbb{F}_{n-1}))+\sum_{i=2}^{n}(-1)^{i+1}m_{i,1}\cdot \det(M[i,1])\\
		&=& tf_{n}(t)+(-1)^{3}m_{1,3}m_{2,2}m_{3,1}\det(tI_{n-3}-A(\mathbb{F}_{n-3}))\\
		&=&tf_{n}(t)+tf_{n-2}(t)\\
		&=&t\ell_{n-1}(t).
	\end{eqnarray*}
\begin{eqnarray*}
		\det(tI_{n}-A(\mathbb{F}'^{1}_{n}))&=&(t-1)\cdot \det(M[1, 1])+\sum_{i=2}^{n}(-1)^{1+i}m_{i,1}\cdot \det(M[i,1])\\
		&=& (t-1) 	\det(tI_{n-1}-A(\mathbb{F}_{n-1}))+(-1)^{3}m_{1,3}m_{2,2}m_{3,1}\det(tI_{n-2}-A(\mathbb{F}_{n-3}))\\
		&=& (t-1)f_{n}(t)+tf_{n-2}(t)\\
		&=&t\ell_{n-1}(t)-f_{n}(t).
	\end{eqnarray*}
	\begin{eqnarray*}
		\det(tI_{n}-A(\mathbb{F}^{1}_{n}))&=&(t-1)\cdot \det(M[1, 1])+\sum_{i=2}^{n}(-1)^{1+i}m_{i,1}\cdot \det(M[i,1])\\
		&=& (t-1) 	\det(tI_{n-1}-A(\mathbb{F}_{n-1}))+(-1)^{3}m_{1,2}m_{2,1}\det(tI_{n-2}-A(\mathbb{F}_{n-2}))\\
		&=& (t-1)f_{n}(t)+f_{n-1}(t)\\
		&=&f_{n+1}(t)-f_{n}(t).
	\end{eqnarray*}
	\begin{eqnarray*}
		\det(tI_{n}-A(\mathbb{W}^{1}_{n}))&=&(t-1)\cdot \det(M[1, 1])+\sum_{i=2}^{n}(-1)^{1+i}m_{i,1}\cdot \det(M[i,1])\\
&=& (t-1) 	\det(tI_{n-1}-A(\mathbb{F}_{n-1}))+(-1)^{1}m_{1,2}m_{2,1}\det(tI_{n-2}-A(\mathbb{F}_{n-2}))\\
& &+(-1)^{n+3}+(-1)^{n+2}[(-1)(-1)^{n-2}+(-1)^{n-1+1}\cdot \det(tI_{n-2}-A(\mathbb{F}_{n-2}))]\\
		&=& (t-1)f_{n}(t)+f_{n-1}(t)+(-1)^{n+1}-1+f_{n-1}(t)\\
		&=&(t-1)f_{n}(t)+2f_{n-1}(t)+(-1)^{n+1}-1.
	\end{eqnarray*}
\begin{eqnarray*}
	\det(tI_{n}-A(\mathbb{F}'^{n}_{n}))&=& t\cdot \det(M[1, 1])+\sum_{i=2}^{n}(-1)^{1+i}m_{i,1}\cdot \det(M[i,1])\\
	&=&   t\cdot \det(tI_{n}-A(\mathbb{F}^{1}_{n-1}))
	+(-1)^{3}m_{1,3}m_{2,2}m_{3,1}\det(tI_{n}-A(\mathbb{F}^{1}_{n-3}))\\
	&=& t(f_{n}(t)-f_{n-1}(t))+t(f_{n-2}(t)-f_{n-3}(t))\\
	&=&t(\ell_{n-1}(t)-\ell_{n-2}(t)).
\end{eqnarray*}
\end{proof}

\section{Concluding remarks}
If a bouquet contains several non-orientable loops, then sometimes it may have a partial dual which is a bouquet with exactly one non-orientable loop. In this case, we can obtain the number of quasi-trees by Matrix-Quasi-tree Theorem $($Theorem \ref{quasitreeTH}$)$.
However, there exists a non-orientable bouquet that does not have a partial-dual which is a bouquet with exactly one non-orientable loop as follows.
\begin{example}
Let $B=(-1,-2,3,1,2,4,3,4)$. Then $D(B)=(E, \mathcal{F})$, where $E=\{1,2,3,4\}$ and $$\mathcal{F}=\{\emptyset, \{1\},\{2\},\{13\}, \{23\},\{34\},\{234\}, \{134\}\}.$$ There are at least two single-element feasible sets if $D(B)$ twists any feasible set. By Proposition \ref{tauandtwist}, there are at least two non-orientable loops in any partial-dual bouquet $B^{\delta(F)}$ where $F\in \mathcal{F}$. Note that if $F\notin \mathcal{F}$, then $B^{\delta(F)}$ is not a bouquet. Furthermore,  we obtain that $\kappa(B)=|\mathcal{F}|=8$, but $\det(I_4+A(B))=14$. Thus Matrix-Quasi-tree Theorem $($Theorem \ref{quasitreeTH}$)$ does not extend to all non-orientable bouquets.
\end{example}

Here are two problems to go for future study.

\begin{problem}
Give the Matrix-Quasi-tree Theorem for all non-orientable bouquets.
\end{problem}

\begin{problem}
Determine the class of ribbon graphs $G$ for which there exists $A\subseteq E(G)$ such that $G^{\delta(A)}$ is a bouquet with exactly one non-orientable loop.
\end{problem}

\section{Acknowledgment}
This work is supported by NSFC (Nos. 12101600, 12171402), and partially supported by the Natural Science Foundation of Hunan Province, P. R. China (No. 2022JJ40418), the Excellent Youth Project of Hunan Provincial Department of Education, P. R. China (No. 23B0117), and the China Scholarships Council (No. 202108430063). The first author thanks the National Institute of Education, Nanyang Technological University, where part of this research was performed.


\begin{thebibliography}{10}

		

\bibitem{bollobas} B. Bollob\'{a}s, O. Riordan, A polynomial of graphs on surfaces, \emph{Math. Ann.}, 323 (2002) 81--96.

\bibitem{AB1} A. Bouchet, Greedy algorithm and symmetric matroids,
\emph{Math. Program.}, {38} (1987) 147--159.	

\bibitem{Bouchet87} A. Bouchet, Unimodularity and circle graphs, \emph{Discrete Math.}, 66 (1987) 203--208.



\bibitem{Chmutov} S. Chmutov, Generalized duality for graphs on surfaces and the signed Bollob\'{a}s-Riordan polynomial, \emph{J. Combin. Theory Ser. B}, 99 (2009), 617--638.

\bibitem{ChunJCTA} C. Chun, I. Moffatt, S. D. Noble, R. Rueckriemen, Matroids, delta-matroids and embedded graphs, \emph{J. Combin. Theory Ser. A}, 167 (2019) 7--59.
	
\bibitem{Chun} C. Chun, I. Moffatt, S. D. Noble, R. Rueckriemen, On the interplay between embedded graphs and delta-matroids, \emph{Proc. London Math. Soc.},
	118 (2019) 675--700.
		
\bibitem{Deng} Q. Deng, X. Jin , M. Metsidik, Characterizations of bipartite and Eulerian partial duals of ribbon graphs, \emph{Discrete Math.}, 343 (2020) 111637.

\bibitem{Deng2} Q. Deng, X. Jin, Q. Yan, Twist polynomial as a weight system for set systems, {arXiv: 2404.10216v1}, 2024.


\bibitem{EM} J. A. Ellis-Monaghan, I. Moffatt, Graphs on Surfaces, Springer, New York, 2013.


\bibitem{GT} J. L. Gross, T. W. Tucker, Topological Graph Theory, John Wiley \&
Sons, New York, 1987.


\bibitem{Haselgrove} C. B. Haselgrove, A note on Fermat's last theorem and Mersenne numbers, \emph{Eureka}, 11 (1949) 19--22.

\bibitem{Roger} R. A. Horn, C. R. Johnson, Matrix Analysis, Cambridge University
Press, Cambridge, 2013.




\bibitem{Lando17} S. Lando, V. Zhukov,  Delta-matroids and Vassiliev invariants, \emph{Mosc. Math. J.}, {17 (2017)} 741--755.

			
\bibitem{Lauri} J. Lauri, On a formula for the number of Euler trails for a class of digraphs, \emph{Discrete Math.}, 163 (1997) 307--312.

\bibitem{Macris} N. Macris, J. V. Pul\'{e}, An alternative formula for the number of Euler trails for a class of digraphs,  \emph{Discrete Math.}, 154 (1996) 301--305.

\bibitem{Merino}  C. Merino,  The number of quasi-trees in fans and wheels,  \emph{Electron. J. Combin.}, 30 (2023) $\sharp$P1.46.

\bibitem{Merino2023} C. Merino, I. Moffatt, S. D. Noble, The critical group of a combinatorial map, arXiv: 2308.13342v1, 2023.



\bibitem{Moffatt15} I. Moffatt, Separability and the genus of a partial dual, \emph{European J. Combin.}, 34 (2013) 355--378.
		
\bibitem{Moffatt} I. Moffatt, E. Mphako-Banda, Handle slides for delta-matroids,
\emph{European J. Combin.}, 59 (2017) 23--33.

\bibitem{Naji} W. Naji, Reconnaissance des graphes de cordes, \emph{Discrete Math.}, 54 (1985) 329--337.




		
\bibitem{Sedl67} J. Sedl\'{a}\v{c}ek, Finite graphs and their spanning trees, \emph{\v{C}asopis P\v{e}st. Mat.}, 92 (1967) 338--342.
	
\bibitem{Sedl69} J. Sedl\'{a}\v{c}ek,  On the number of spanning trees of finite graphs, \emph{\v{C}asopis P\v{e}st. Mat.}, 94 (1969) 217--221.








\end{thebibliography}
\end{document}